\newtheorem{thm}{Theorem}[section]
\newtheorem{lem}{Lemma}[section]
\newtheorem{defn}{Definition}[section]
\newtheorem{prop}{Proposition}[section]
\newtheorem{cor}{Corollary}[section]
\newcommand{\mb}{\mathbf}
\newenvironment{proof}{\medskip \noindent
{\bf Proof.}}{\hfill \rule{.5em}{1em} \\}
\title{Equivalent metrics and compactifications}
\author{Young Deuk Kim \\ School of Mathematical Sciences\\
Seoul National University\\ Seoul, 151-747, Korea\\(ydkimus@yahoo.com)}
\date{\today}
\begin{document}
\maketitle

\begin{abstract}
Let $(X,d)$ be a metric space and $m\in X$. Suppose that $\phi:X\times
X\to\mb{R}$ is a nonnegative symmetric function. We define a metric
$d^{\phi,m}$ on $X$ which is equivalent to $d$.
If $d^{\phi,m}$ is totally bounded, its completion is a compactification of
$(X,d)$. As examples, we construct two compactifications of
$(\mb{R}^s,d_E)$, where $d_E$ is the Euclidean metric and $s\geq 2$.\\

\noindent
{\footnotesize{\textbf{key words.}}} equivalent metric; completion;
compactification\\
\noindent
{\footnotesize{\textbf{Mathematics Subject Classifications (2000).}}}
54E35, 54D35
\end{abstract}

\section{The metric $d^{\phi,m}$}\label{sec;metric}

Let $(X,d)$ be a metric space and $m\in X$. Suppose that
$\phi:X\times X\to\mb{R}$ is a nonnegative symmetric function.
As usual, two metrics $d_1$ and $d_2$ on a set $X$ are called equivalent if
$(X,d_1)$ and $(X,d_2)$ are homeomorphic. In this section, we will define a
metric $d^{\phi,m}$ on $X$ which is equivalent to $d$. 

\indent
For each $x,y\in X$, let
$$\delta^{\phi,m}(x,y)=\min\left\{ d(x,y),\ {1\over{1+d(m,x)}}+\phi(x,y)+
 {1\over{1+d(m,y)}}\right\}.$$

\noindent
And for each $x,y\in X$ and $n\in\mb{N}$, let
$$\Gamma^n_{x,y}=\{\, (x_0,\cdots,x_n)\mid x_0=x,x_n=y\mbox{ and }
x_i\in X\mbox{ for all }i\, \}$$
and
$$\Gamma_{x,y}=\bigcup_{n\in\mb{N}}\Gamma^n_{x,y}.$$

\indent
Notice that $\Gamma_{x,y}\neq\emptyset$ for all $x,y\in X$. In the following
definition, the infimum runs over all elements of $\Gamma_{x,y}$.

\begin{defn}
Suppose that $x,y\in X$. Let
\begin{equation}\label{eq;metric}
d^{\phi,m}(x,y)=\inf_{\Gamma_{x,y}}\sum^n_{i=1}\delta^{\phi,m}(x_{i-1},x_i).
\end{equation}
\end{defn}

\noindent
For the sake of simplicity, we will simply write $d^\phi$, $\delta^\phi$
to denote $d^{\phi,m}$, $\delta^{\phi,m}$ respectively. In particular, we write
eq. (\ref{eq;metric}) as
$$d^\phi(x,y)=\inf_{\Gamma_{x,y}}\sum^n_{i=1}
\delta^\phi(x_{i-1},x_i).$$

\noindent
Notice that $(x,y)\in\Gamma_{x,y}$, and therefore
\begin{equation}\label{eq;trivial}
d^\phi(x,y)=\inf_{\Gamma_{x,y}}\sum^n_{i=1}\delta^\phi(x_{i-1},x_i)
\leq\delta^\phi(x,y)\leq d(x,y).
\end{equation}
Notice also that $d^\phi$ is nonnegative. Therefore from eq.
(\ref{eq;trivial}), we have
\begin{equation}\label{eq;same}
d^\phi(x,x)=0\quad\mbox{for all }x\in X.
\end{equation}

\indent
The following subset $\Delta_{x,y}$ of $\Gamma_{x,y}$ is useful in the proof
of Lemma \ref{lem;useful}.
$$\Delta_{x,y}=\{\, (x_0,\cdots,x_n)\in\Gamma_{x,y}\mid
\delta^\phi(x_{i-1},x_i)\neq d(x_{i-1},x_i)\ \mbox{for some }
1\leq i\leq n\, \}.$$

\begin{lem}\label{lem;useful}
Suppose that $d^\phi(x,y)\neq d(x,y)$. Then
$$d^\phi(x,y)\geq {1\over{2(1+d(m,x))}}.$$
\end{lem}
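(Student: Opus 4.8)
The plan is to combine the hypothesis with the trivial bound (\ref{eq;trivial}) to get $d^\phi(x,y)<d(x,y)$, and then to show that the infimum defining $d^\phi(x,y)$ is controlled entirely by the chains in $\Delta_{x,y}$. The key observation is that if a chain $(x_0,\cdots,x_n)\in\Gamma_{x,y}$ has $\delta^\phi(x_{i-1},x_i)=d(x_{i-1},x_i)$ for every $i$, then
$$\sum^n_{i=1}\delta^\phi(x_{i-1},x_i)=\sum^n_{i=1}d(x_{i-1},x_i)\geq d(x,y)$$
by the triangle inequality for $d$. Since $d^\phi(x,y)<d(x,y)$, such chains cannot approach the infimum: for any $\epsilon>0$ with $d^\phi(x,y)+\epsilon<d(x,y)$, a chain whose sum is below $d^\phi(x,y)+\epsilon$ has sum $<d(x,y)$ and hence must lie in $\Delta_{x,y}$. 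Letting $\epsilon\to0$ gives
$$d^\phi(x,y)=\inf_{\Delta_{x,y}}\sum^n_{i=1}\delta^\phi(x_{i-1},x_i),$$
so it suffices to bound the sum over an arbitrary chain of $\Delta_{x,y}$ from below.

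Given a chain $(x_0,\cdots,x_n)\in\Delta_{x,y}$, let $j$ be the smallest index with $\delta^\phi(x_{j-1},x_j)\neq d(x_{j-1},x_j)$. By minimality the first $j-1$ steps are ordinary $d$-steps, so $\sum^{j-1}_{i=1}\delta^\phi(x_{i-1},x_i)\geq d(x,x_{j-1})$ by the triangle inequality. At the $j$-th step the second term of the minimum is the smaller one, so, using $\phi\geq0$ and discarding the nonnegative summand ${1\over{1+d(m,x_j)}}$, we have $\delta^\phi(x_{j-1},x_j)\geq{1\over{1+d(m,x_{j-1})}}$. Discarding the remaining nonnegative terms of the chain, the total sum is therefore at least
$$d(x,x_{j-1})+{1\over{1+d(m,x_{j-1})}}.$$

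It then remains to establish the elementary inequality that this quantity is at least ${1\over{2(1+d(m,x))}}$ (indeed at least ${1\over{1+d(m,x)}}$). Writing $a=d(m,x)$, $b=d(m,x_{j-1})$ and $c=d(x,x_{j-1})$, the triangle inequality yields $|a-b|\leq c$, and I would prove $c+{1\over{1+b}}\geq{1\over{1+a}}$ by a short case split: if $b\leq a$ then ${1\over{1+b}}\geq{1\over{1+a}}$ already; if $b>a$ then $c\geq b-a$, and the claim reduces to the monotonicity of $t\mapsto t+{1\over t}$ on $[1,\infty)$ applied with $t=1+b>1+a\geq1$. This final inequality is the only genuine computation and is the step I expect to be the main, if mild, obstacle; the rest is bookkeeping about which branch of the minimum is active along the chain. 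Feeding it back through the reduction to $\Delta_{x,y}$ gives $d^\phi(x,y)\geq{1\over{2(1+d(m,x))}}$, as claimed.
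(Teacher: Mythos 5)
Your proof is correct and follows the same skeleton as the paper's: reduce the infimum to $\Delta_{x,y}$ using $d^\phi(x,y)<d(x,y)$, locate the first link of the chain where the second branch of the minimum in $\delta^\phi$ is active, bound the preceding links from below by $d(x,x_{j-1})$ via the triangle inequality, and bound the broken link from below by ${1\over{1+d(m,x_{j-1})}}$. The one place you genuinely diverge is the endgame: the paper splits into the cases $d(x,x_{j-1})\geq 1+d(m,x)$ and $d(x,x_{j-1})<1+d(m,x)$ and keeps only one of the two summands in each case, which is exactly what produces the factor $2$ in the denominator. Your elementary inequality $c+{1\over{1+b}}\geq{1\over{1+a}}$ for $|a-b|\leq c$ --- correctly proved via the monotonicity of $t\mapsto t+{1\over t}$ on $[1,\infty)$ --- keeps both summands and yields the sharper conclusion $d^\phi(x,y)\geq{1\over{1+d(m,x)}}$, of which the stated bound is an immediate consequence. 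Both arguments are sound; yours buys a better constant at the price of a slightly less self-evident final estimate, while the paper's case split is cruder but requires no computation beyond the triangle inequality.
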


\begin{proof}
Suppose that $d^\phi(x,y)\neq d(x,y)$. By eq. (\ref{eq;same}) we have
$x\neq y$, and  by eq. (\ref{eq;trivial}) we have
\begin{equation}\label{eq;neq}
d^\phi(x,y)<d(x,y).
\end{equation}
If $(x_0,\cdots,x_n)\in\Gamma_{x,y}-\Delta_{x,y}$, then
$$\sum^n_{i=1}\delta^\phi(x_{i-1},x_i)=\sum^n_{i=1}
d(x_{i-1},x_i)\geq d(x,y).$$
Therefore from eq. (\ref{eq;neq}), we have $\Delta_{x,y}\neq\emptyset$ and
\begin{equation}
d^\phi(x,y)=\inf_{\Delta_{x,y}}\sum^n_{i=1}\delta^\phi(x_{i-1},x_i).
\label{eq;up}
\end{equation}

\indent
Suppose that $(x_0,\cdots,x_n)\in\Delta_{x,y}$.
Let $k$ be the smallest integer such that $\delta^\phi(x_k,x_{k+1})\neq
d(x_k,x_{k+1})$. Notice that if $k\geq 1$ then
$$\delta^\phi(x_{i-1},x_i)=d(x_{i-1},x_i)\quad\mbox{for all }
1\leq i\leq k.$$

\indent
If $d(x_0,x_k)\geq 1+d(m,x_0)$ then we have $k\geq 1$, and therefore
\begin{eqnarray}
\sum^n_{i=1} \delta^{\phi}(x_{i-1},x_i)&\geq& \sum^k_{i=1}
\delta^{\phi}(x_{i-1},x_i)\nonumber \\
&=&\sum^k_{i=1} d(x_{i-1},x_i) \nonumber\\
&\geq& d(x_0,x_k) \nonumber\\
&\geq& 1+d(m,x_0)\nonumber\\
&=& 1+d(m,x).\label{eq;lem1}
\end{eqnarray}

\indent
If $d(x_0,x_k)<1+d(m,x_0)$ then
$$1+d(m,x_k)\leq 1+d(m,x_0)+d(x_0,x_k)<2+2d(m,x_0).$$
Therefore
\begin{eqnarray}
\sum^n_{i=1}\delta^\phi(x_{i-1},x_i)
&\geq& \delta^\phi(x_k,x_{k+1})\nonumber\\
&=& {1\over {1+d(m,x_k)}}+\phi(x_k,x_{k+1})+{1\over {1+d(m,x_{k+1})}}
\nonumber\\
&>& {1\over {1+d(m,x_k)}} \nonumber\\
&>& {1\over{2(1+d(m,x_0))}} \nonumber\\
&=& {1\over{2(1+d(m,x))}}.\label{eq;lem2}
\end{eqnarray}

Hence from eq. (\ref{eq;up}), (\ref{eq;lem1}) and (\ref{eq;lem2}), we have
$$d^\phi(x,y)\geq \min\left\{1+d(m,x),\ {1\over{2(1+d(m,x))}}\right\}
={1\over{2(1+d(m,x))}}.$$
\end{proof}

\indent
Now we show that $d^\phi$ is a metric on $X$.

\begin{thm}\label{thm;metric}
$d^\phi$ is a metric on $X$.
\end{thm}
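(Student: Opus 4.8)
The plan is to verify the three metric axioms for $d^\phi$: nonnegativity with $d^\phi(x,y)=0 \iff x=y$, symmetry, and the triangle inequality. The triangle inequality should be the easiest, so I would dispatch it first; symmetry is routine; and the separation axiom (that $d^\phi(x,y)=0$ forces $x=y$) is where the real work lies, which is precisely why Lemma \ref{lem;useful} was proved in advance.

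Let me think about each axiom and which is the obstacle.

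Nonnegativity is already noted after eq. (\ref{eq;trivial}), since every $\delta^\phi \geq 0$, and eq. (\ref{eq;same}) gives $d^\phi(x,x)=0$. Symmetry follows because $\delta^\phi$ is symmetric (it's built from $d$, $\phi$, and the symmetric-in-the-pair expression, all symmetric) and because reversing a path $(x_0,\dots,x_n) \in \Gamma_{x,y}$ gives a path $(x_n,\dots,x_0) \in \Gamma_{y,x}$ with the same total sum, so the two infima agree.

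The triangle inequality should come from concatenation of paths. Given $x,y,z$, any path from $x$ to $y$ and any path from $y$ to $z$ can be joined at $y$ to produce a path in $\Gamma_{x,z}$, whose $\delta^\phi$-sum equals the sum of the two pieces. Taking infima yields $d^\phi(x,z) \leq d^\phi(x,y)+d^\phi(y,z)$.

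The hard part is showing $d^\phi(x,y)=0 \implies x=y$, and this is exactly what Lemma \ref{lem;useful} is for. I would argue by contrapositive: suppose $x \neq y$. Then by eq. (\ref{eq;same}) I cannot immediately conclude, so instead I split into two cases according to whether $d^\phi(x,y)=d(x,y)$ or not. If $d^\phi(x,y)=d(x,y)$, then since $x \neq y$ and $d$ is a metric, $d(x,y)>0$, so $d^\phi(x,y)>0$. If instead $d^\phi(x,y)\neq d(x,y)$, then Lemma \ref{lem;useful} gives $d^\phi(x,y) \geq \frac{1}{2(1+d(m,x))}>0$. In either case $d^\phi(x,y)>0$, establishing the separation axiom. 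This two-case dichotomy, anchored by the lemma's uniform positive lower bound, is the crux of the argument; the remaining axioms are essentially formal consequences of the infimum-over-paths construction.
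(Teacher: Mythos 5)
Your proposal is correct and follows essentially the same route as the paper: nonnegativity and $d^\phi(x,x)=0$ from eq. (\ref{eq;trivial}) and (\ref{eq;same}), symmetry from reversing paths, the triangle inequality from concatenating near-optimal paths, and the separation axiom from Lemma \ref{lem;useful} (the paper phrases this directly---if $d^\phi(x,y)=0$ then the lemma forces $d^\phi(x,y)=d(x,y)$, hence $x=y$---while you phrase the equivalent contrapositive). No gaps.
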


\begin{proof}
From eq. (\ref{eq;metric}) and (\ref{eq;same}), recall that $d^\phi$ is
nonnegative and $d^\phi(x,x)=0$ for all $x\in X$.
Suppose that $d^\phi(x,y)=0$. By Lemma \ref{lem;useful}, we have
$d(x,y)=d^\phi(x,y)=0$. Thus $x=y$.

\indent
Suppose that $x,y\in X$. Notice that
$(x_0,x_1,\cdots,x_n)\in\Gamma_{x,y}$ if and only if
$(x_n,x_{n-1},\cdots,x_0)\in\Gamma_{y,x}$.
Since $\phi$ is symmetric, so is $\delta^\phi$. Therefore
$$\sum^n_{i=1}\delta^\phi(x_{i-1},x_i)=\sum^n_{i=1}\delta^
\phi(x_{n+1-i},x_{n-i})
\quad\mbox{for all } (x_0,x_1,\cdots,x_n)\in\Gamma_{x,y}.$$
Hence $d^\phi(x,y)=d^\phi(y,x)$.

\indent
Suppose that $x,y,z\in X$ and $\epsilon>0$.
There exist $(x_0,x_1,\cdots,x_n)\in\Gamma_{x,y}$ and
$(y_0,y_1,\cdots,y_m)\in\Gamma_{y,z}$ such that
$$\sum^n_{i=1}\delta^\phi(x_{i-1},x_i)<d^\phi(x,y)+{\epsilon\over 2}\quad
\mbox{and}\quad
\sum^m_{j=1}\delta^{\phi}(y_{j-1},y_j)<d^\phi(y,z)+{\epsilon\over 2}.$$
Notice that $(x_0,\cdots,x_n=y=y_0,\cdots,y_m)\in\Gamma_{x,z}$.
Therefore
\begin{eqnarray*}
d^\phi(x,z)&\leq& \sum^n_{i=1}\delta^\phi(x_{i-1},x_i)+
\sum^m_{j=1}\delta^\phi(y_{j-1},y_j)\\
&<&d^\phi(x,y)+{\epsilon\over 2}+
d^\phi(y,z)+{\epsilon\over 2}\\
&=&d^\phi(x,y)+d^\phi(y,z)+\epsilon.
\end{eqnarray*}
Since $\epsilon$ is arbitrary, we have
$d^\phi(x,z)\leq d^\phi(x,y)+d^\phi(y,z)$.
\end{proof}

\indent
By the following lemma, the identity map from $(X,d^\phi)$ to $(X,d)$ is
continuous.

\begin{lem}\label{lem;iso}
For all $x\in X$, there exists an open ball $B_x$ in $(X,d^\phi)$, with
center $x$, such that $d^\phi(y,z)=d(y,z)$ for all $y,z\in B_x$.
\end{lem}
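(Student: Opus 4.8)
The plan is to exploit the quantitative gap supplied by Lemma \ref{lem;useful}: whenever $d^\phi$ fails to equal $d$ on a pair, the $d^\phi$-distance between that pair is bounded below in terms of $d(m,\cdot)$. So inside a sufficiently small $d^\phi$-ball about $x$ no such ``shortcut'' pair can occur, which forces $d^\phi=d$ throughout the ball. Concretely I would write $R=1+d(m,x)$ and take $B_x$ to be the open $d^\phi$-ball about $x$ of radius $r=\frac{1}{4(R+1)}$, then verify the claim with two applications of the contrapositive of Lemma \ref{lem;useful} (which reads: if $d^\phi(a,b)<\frac{1}{2(1+d(m,a))}$ then $d^\phi(a,b)=d(a,b)$).

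First I would pin down the metrics along radii. Since $r\leq\frac{1}{2R}$, any $y\in B_x$ satisfies $d^\phi(x,y)<r\leq\frac{1}{2(1+d(m,x))}$, so the contrapositive of Lemma \ref{lem;useful} applied to the pair $(x,y)$ gives $d^\phi(x,y)=d(x,y)$, and in particular $d(x,y)<r$. I would then feed this into the ordinary triangle inequality for $d$ to control the distance from $m$: for $y\in B_x$ we get $d(m,y)\leq d(m,x)+d(x,y)<R-1+r$, hence $1+d(m,y)<R+r$. This is the crucial intermediate step, because Lemma \ref{lem;useful} measures the gap using the distance from $m$ to the \emph{first} argument, so I need a uniform upper bound on $d(m,y)$ over the whole ball, and that bound only becomes available after $d^\phi=d$ has been established near $x$.

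With this in hand the final step is quick. For any $y,z\in B_x$ the triangle inequality for $d^\phi$ gives $d^\phi(y,z)\leq d^\phi(y,x)+d^\phi(x,z)<2r$. The choice of $r$ is arranged so that $2r\leq\frac{1}{2(R+r)}$, and combining with $1+d(m,y)<R+r$ yields $d^\phi(y,z)<2r\leq\frac{1}{2(R+r)}<\frac{1}{2(1+d(m,y))}$. Applying the contrapositive of Lemma \ref{lem;useful} to the pair $(y,z)$ then forces $d^\phi(y,z)=d(y,z)$, which is exactly the assertion.

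I expect the only genuine obstacle to be the bookkeeping behind the choice of $r$: it must be small enough both to rule out shortcuts emanating from the center (the constraint $r\leq\frac{1}{2R}$) and to keep $2r$ below the gap threshold after $d(m,y)$ has been allowed to drift from $d(m,x)$ by as much as $r$ (the constraint $4r(R+r)\leq 1$). One checks that $r=\frac{1}{4(R+1)}$ satisfies both at once, since $\frac{1}{4(R+1)}\leq\frac{1}{2R}$ reduces to $2R\leq 4(R+1)$, and $4r(R+r)=\frac{R+r}{R+1}<1$ because $r<1$; any strictly smaller radius would serve equally well.
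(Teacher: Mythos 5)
Your proposal is correct and follows essentially the same argument as the paper: choose a small $d^\phi$-ball about $x$, use the contrapositive of Lemma \ref{lem;useful} on pairs $(x,y)$ to identify $d^\phi$ with $d$ along radii, bound $d(m,y)$ via the triangle inequality for $d$, and then apply the contrapositive again to pairs $(y,z)$. The only difference is bookkeeping: the paper takes radius $\frac{1}{8(1+d(m,x))}$ with the cruder bound $1+d(m,y)<2(1+d(m,x))$, while you take $\frac{1}{4(R+1)}$ with the tighter bound $1+d(m,y)<R+r$; both work.
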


\begin{proof}
For each $x\in X$, let
$$B_x=\left\{y\in X\mid d^\phi(y,x)<{1\over 8(1+d(m,x))}\right\}.$$
Suppose that $y\in B_x$. By Lemma \ref{lem;useful}, we have
$d^\phi(x,y)=d(x,y)$, and therefore
\begin{eqnarray}
d(m,y)&\leq&d(m,x)+d(x,y)\nonumber \\
&=&d(m,x)+d^\phi(x,y)\nonumber \\
&<&d(m,x)+{1\over 8(1+d(m,x))}\nonumber \\
&<&d(m,x)+1+d(m,x)\nonumber \\
&=&1+2d(m,x).\label{eq;bx}
\end{eqnarray}

\indent
Suppose that $y,z\in B_x$. From eq. (\ref{eq;bx}), we have
$1+d(m,y)<2+2d(m,x)$. Therefore
\begin{eqnarray*}
d^\phi(y,z)&\leq& d^\phi(y,x)+d^\phi(x,z)\\
&<&{1\over 8(1+d(m,x))}+{1\over 8(1+d(m,x))}\\
&=&{1\over 4(1+d(m,x))}\\
&<&{1\over 2(1+d(m,y))}.
\end{eqnarray*}
Hence by Lemma \ref{lem;useful}, we have $d^\phi(y,z)=d(y,z)$.
\end{proof}

\indent
By the following corollary, $d^\phi$ is equivalent to $d$ for all $\phi$ and
$m$.
\begin{cor}\label{cor;homeo}
The identity map from $(X,d^\phi)$ to $(X,d)$ is a homeomorphism.
\end{cor}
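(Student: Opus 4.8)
The plan is to note that the identity map $\iota\colon(X,d^\phi)\to(X,d)$ is a bijection, so that, $d^\phi$ being a genuine metric by Theorem \ref{thm;metric}, it suffices to verify that both $\iota$ and its inverse are continuous. I would treat the two directions separately, because the work they require is very different: one follows for free from the basic inequality (\ref{eq;trivial}), while the other is precisely what Lemma \ref{lem;iso} was designed to supply.

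First I would dispose of the continuity of the inverse $\iota^{-1}\colon(X,d)\to(X,d^\phi)$. Eq. (\ref{eq;trivial}) gives $d^\phi(x,y)\leq d(x,y)$ for all $x,y\in X$, so for any $x$ and any $\epsilon>0$ the choice $\delta=\epsilon$ suffices: if $d(x,y)<\delta$ then $d^\phi(x,y)\leq d(x,y)<\epsilon$. Thus $\iota^{-1}$ is continuous (indeed uniformly so) with no further effort.

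Next I would establish the continuity of $\iota$ itself, where the only real content lies. Fix $x\in X$ and let $B_x$ be the $d^\phi$-ball from Lemma \ref{lem;iso}, of radius $r_x={1\over 8(1+d(m,x))}$, on which $d^\phi$ and $d$ agree. Given $\epsilon>0$, put $\delta=\min\{\epsilon,r_x\}$. Then $d^\phi(x,y)<\delta$ forces $y\in B_x$, so $d(x,y)=d^\phi(x,y)<\delta\leq\epsilon$. This yields continuity of $\iota$ at each $x$, and together with the previous paragraph it shows $\iota$ is a homeomorphism.

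I expect the genuine obstacle to lie not in the corollary but upstream, where it has already been cleared: the sharp estimate is Lemma \ref{lem;useful}, whose bound ${1\over 2(1+d(m,x))}$ is what forces $d^\phi$ and $d$ to coincide on an entire $d^\phi$-neighbourhood of each point, as recorded in Lemma \ref{lem;iso}. Once that local-agreement statement is available, the corollary reduces to a routine two-sided $\epsilon$-$\delta$ argument; the single point deserving care is to intersect the target tolerance $\epsilon$ with the radius $r_x$, so that the comparison point $y$ really lands in the region where the two metrics are equal.
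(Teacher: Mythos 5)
Your proposal is correct and is exactly the argument the paper intends: continuity of the inverse follows from $d^\phi\leq d$ in eq. (\ref{eq;trivial}), and continuity of the identity map itself follows from the local agreement of the two metrics provided by Lemma \ref{lem;iso}. The paper simply declares this ``trivial'' from those two ingredients, so your write-up is a faithful expansion of the same proof.
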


\begin{proof}
By eq. (\ref{eq;trivial}) and Lemma \ref{lem;iso}, it is trivial.
\end{proof}

\section{The compactification}\label{sec;cpt}

A compactification of a topological space $X$ is a compact Hausdorff
space $Y$ containing $X$ as a subspace such that $\overline{X}=Y$.
It is known that every metric space has a compactification
(see \cite{MUNKRES}, \S 38).
With the equivalent metric in the previous section, we are able
to construct various compactifications of a metric space.

\indent
Let $(X,d)$ be a metric space. Suppose that $m\in X$ and
$\phi:X\times X\to\mb{R}$ is a nonnegative symmetric function.
To get a compactification, we assume that
$$(X,d^\phi)=(X,d^{\phi,m})\quad\mbox{is totally bounded},$$
ie. there is a finite covering by $\epsilon$ balls for every $\epsilon>0$.
Then our compactification of $(X,d)$ is the completion 
$(\overline{X},\rho)$ of the totally bounded metric space $(X,d^\phi)$.

\indent
Notice that $X$ is a dense subset of $\overline{X}$ and $(\overline{X},\rho)$
is a compact metric space (see \cite{MUNKRES}, \S 45 and \cite{DUGUNDJI},
\S XIV.3 for details).
$\overline{X}$ can be considered as the set of equivalence classes of all
Cauchy sequences in $(X,d^\phi)$ with the equivalence relation
(see {\cite{GAAL}, \S V.7})
$$x_i\sim y_i\quad \mbox{if and only if}\quad
  \lim_{i\to\infty}d^\phi(x_i,y_i)= 0,$$
where a point $x$ in $X$ is identified to the equivalence class of constant
Cauchy sequence $\{x\}$.

\indent
Suppose that $\{x_i\},\{y_i\}\in\overline{X}$.
The metric $\rho$ is given by
$$\rho(\{x_i\},\{y_i\})=\lim_{i\to\infty}d^\phi(x_i,y_i).$$
In particular, we have
$$\rho(\{x\},\{y\})=d^\phi(x,y)\quad\mbox{for all }x,y\in X.$$

\indent
In 2002, the author had tried to apply this compactification to the research on
the tameness conjecture of Marden(\cite{MARDEN}) which was proved by
Agol(\cite{AGOL}) and Calegari-Gabai(\cite{CG}) in 2004, independently.
The author think that the compactification could be useful in the study of
Teichm\"uller space. In the next two sections, we apply the compactification
to the Euclidean metric space $\mb{R}^s$ with $s\geq 2$.

\section{The standard compactification of $(\mb{R}^s,d_E)$}\label{sec;scpt}

Let $O=(0,\cdots,0)\in\mb{R}^s$. We write $d_E$ to denote the Euclidean metric
on $\mb{R}^s$. In this section, as an example of the compactification in
Section \ref{sec;cpt}, we construct a compactification of $(\mb{R}^s,d_E)$, 
which will be called {\em the standard compactification}, which is homeomorphic
to the Euclidean closed unit ball
$$B^s=\{x\in\mb{R}^s\mid d_E(O,x)\leq 1\}.$$
Notice that we need to define a nonnegative symmetric function
$\phi: \mb{R}^s\times\mb{R}^s\to\mb{R}$ such that
$$(\mb{R}^s,d^\phi)=(\mb{R}^s,d_E^{\phi,O})$$
is totally bounded, where we wrote $d^\phi$ to denote $d_E^{\phi,O}$ for the
sake of simplicity.

\indent
For all $m\in\mb{N}$, let
$$ a_m=1+\frac{1}{2}+\cdots+\frac{1}{m} $$
and
$$ S_m=\{ x\in\mb{R}^s\mid d_E(O,x)=a_m\}. $$
Note that $a_m$ is an increasing sequence and $\lim_{m\to\infty}a_m=\infty$.

\indent
For all $p,q\in\mb{N}$, let $h_{p,q}:S_p\to S_q$ be the homeomorphism
defined by
$$h_{p,q}(x)=\frac{a_q}{a_p}\, x \quad\mbox{for all }x\in S_p.$$
Notice that if $h_{p,q}(x)=y$ then $ h_{q,p}(y)=x$.
We define the nonnegative symmetric function $\phi$ as follows.
\begin{defn}
\begin{eqnarray*}
\phi(x,y)=\left\{
\begin{array}{cl}
0 &\mbox{ if }\ h_{p,q}(x)=y\ \mbox{for some }p,q\in\mb{N}\\
\frac{1}{a_m}\, d_E(x,y)=d_E\left(\frac{x}{a_m},\frac{y}{a_m}\right) 
&\mbox{ if }\ x,y\in S_m\ \mbox{for some }m\in\mb{N}\\
d_E(x,y) &\mbox{ otherwise }
\end{array}
\right.
\end{eqnarray*}
\end{defn}

\indent
Suppose that $x\in\mb{R}^s$ and $r>0$. We write $B_r(x)$ to denote the
Euclidean open ball with center $x$ and radius $r$, and $B^\phi_r(x)$ to
denote the open ball in $(\mb{R}^s,d^\phi)$.
Now we show that $(\mb{R}^s,d^\phi)$ is totally bounded.
\begin{lem}
$(\mb{R}^s,d^\phi)$ is totally bounded.
\end{lem}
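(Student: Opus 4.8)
The plan is to produce, for each $\ep>0$, a finite cover of $\mb{R}^s$ by $d^\phi$-balls of radius $\ep$, by splitting $\mb{R}^s$ into a bounded core around $O$ and a far region, and then showing that the entire far region collapses into a small $d^\phi$-neighborhood of one fixed sphere $S_N$. Two cheapness facts drive the argument. First, by eq.~(\ref{eq;trivial}) we have $d^\phi\le d_E$ pointwise, so every Euclidean ball $B_r(x)$ is contained in $B^\phi_r(x)$; hence any finite cover of a set by Euclidean $r$-balls is automatically a cover by $d^\phi$-balls of the same radius. Second, along the distinguished spheres the metric is genuinely contracting: if $x\in S_p$ and $y=h_{p,q}(x)\in S_q$ lie on a common ray, then $\phi(x,y)=0$, so $d^\phi(x,y)\le\delta^\phi(x,y)\le\frac{1}{1+a_p}+\frac{1}{1+a_q}$, which tends to $0$ as $p,q\to\infty$.

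For the bounded core I would fix a large $N$ (chosen precisely below) and take $\{z\in\mb{R}^s:d_E(O,z)\le a_N\}$. This set is bounded in $(\mb{R}^s,d_E)$, hence totally bounded there, so it admits a finite cover by Euclidean $\frac{\ep}{2}$-balls with centers $c_1,\dots,c_M$; by the containment above, the balls $B^\phi_{\ep/2}(c_i)$ cover it as well.

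For the far region $\{z:d_E(O,z)>a_N\}$ I would route each point to $S_N$ in two cheap steps. Given such $z$, let $m\ge N$ be the largest index with $a_m\le d_E(O,z)$, so $a_m\le d_E(O,z)<a_{m+1}$. Step one is radial: the point $z'=a_m\,z/d_E(O,z)\in S_m$ satisfies $d^\phi(z,z')\le d_E(z,z')=d_E(O,z)-a_m<a_{m+1}-a_m=\frac{1}{m+1}\le\frac{1}{N+1}$, the point being that the harmonic radii make the annuli radially thin. Step two is the sphere jump: $h_{m,N}(z')\in S_N$ lies on the same ray as $z'$, so by the contraction fact $d^\phi(z',h_{m,N}(z'))\le\frac{1}{1+a_m}+\frac{1}{1+a_N}\le\frac{2}{1+a_N}$. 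Combining, every far point lies within $d^\phi$-distance $\frac{1}{N+1}+\frac{2}{1+a_N}$ of $S_N$.

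To finish I would choose $N$ at the outset so large that $\frac{1}{N+1}+\frac{2}{1+a_N}<\frac{\ep}{2}$, which is possible since $a_N\to\infty$. Then every point of the far region is within $\frac{\ep}{2}$ of a point of $S_N$, which itself sits in the bounded core and is therefore within $\frac{\ep}{2}$ of some $c_i$; hence every point of $\mb{R}^s$ is within $\ep$ of some $c_i$, and $\{B^\phi_\ep(c_i)\}_{i=1}^M$ is the desired finite cover. I expect the main obstacle to be the uniform control of the far region: a single radial or spherical step gives no discount for a generic point (there $\phi=d_E$ and $\delta^\phi=d_E$), so one must find the correct two-step route through a distinguished sphere, and the estimate only closes because the two design choices cooperate—$\phi=0$ on rays makes the outward jump to $S_N$ cost $O(1/a_N)$, while the choice $a_m=1+\tfrac12+\cdots+\tfrac1m$ makes the residual annulus thickness $a_{m+1}-a_m=\frac{1}{m+1}$ tend to $0$.
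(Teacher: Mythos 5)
Your proposal is correct and follows essentially the same route as the paper: cover a bounded core by finitely many balls, and send each far point first radially onto its sphere $S_m$ (cost $<\frac{1}{m+1}$ by the harmonic spacing) and then along its ray to a fixed inner sphere (cost $\le\frac{2}{1+a_N}$ since $\phi$ vanishes on ray-related pairs), exactly as in the paper's two-step estimate through $S_k$. The only cosmetic difference is that you cover the core by Euclidean balls and invoke $d^\phi\le d_E$, while the paper invokes compactness of the core in $(\mb{R}^s,d^\phi)$ via Corollary \ref{cor;homeo}; both are valid.
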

\begin{proof}
Let $\epsilon>0$. We may assume that $\epsilon<1$. Choose $k\in\mathbf{N}$
such that
\begin{equation}\label{eq;tb1}
\frac{1}{1+k}<\frac{\epsilon}{4}\quad\mbox{and}\quad
\frac{1}{1+a_k}<\frac{\epsilon}{4},
\end{equation}
and let
$$B_{k+1}=\left\{x\in\mathbf{R}^s\mid d_E(O,x)\leq a_{k+1}\right\}.$$
Since $B_{k+1}$ is compact in $(\mathbf{R}^s,d_E)$, so is in $(\mathbf{R}^s,
d^\phi)$ by Corollary \ref{cor;homeo}.
Therefore we can cover $B_{k+1}$ with finite number of
$\epsilon$-balls in $(\mathbf{R}^s,d^\phi)$. Notice that $S_k\subset B_{k+1}$.
Since $S_k$ is also compact in $(\mathbf{R}^s,d_E)$, we can cover
$S_k$ with finite number of Euclidean $\frac{\epsilon}{4}$-balls with centers
$x_1,x_2,\cdots,x_N\in S_k$.
From eq. (\ref{eq;trivial}), we have
$$S_k\subset\bigcup_{i=1}^N B_{\frac{\epsilon}{4}}(x_i)
\subset\bigcup_{i=1}^N B_{\epsilon}(x_i)
\subset\bigcup_{i=1}^N B^\phi_{\epsilon}(x_i).$$
Note that if $z\in S_k$ then there exists $x_i\in\{x_1,x_2,\cdots,x_N\}
\subset S_k$ such that
$$d_E(z,x_i)<\frac{\epsilon}{4}.$$

\indent
To show that $(\mb{R}^s,d^\phi)$ is totally bounded, it is enough to show that
if $x\notin B_{k+1}$ then there exists $x_i\in\{x_1,x_2,\cdots,x_N\}$
such that $d^\phi(x,x_i)<\epsilon$.
Suppose that $x\notin B_{k+1}$. There exists $m\in\mathbf{N}$ such that
$$a_m\leq d_E(O,x)<a_{m+1}.$$
Since $x\notin B_{k+1}$, we have $k<m$. Let
$$y=\frac{a_m}{d_E(O,x)}\, x\in S_m.$$
From eq. (\ref{eq;tb1}), we have
\begin{equation}\label{eq;tb2}
d_E(x,y)<\frac{1}{1+m}<\frac{1}{1+k}<\frac{\epsilon}{4}.
\end{equation}
Let $z$ be the point in $S_k$ such that $h_{k,m}(z)=y$.
Choose $x_i\in\{x_1,x_2,\cdots,x_N\}$ such that
\begin{equation}\label{eq;tb3}
d_E(z,x_i)<\frac{\epsilon}{4}.
\end{equation}
From eq. (\ref{eq;trivial}), (\ref{eq;tb1}), (\ref{eq;tb2})
and (\ref{eq;tb3}), we have
\begin{eqnarray*}
d^\phi(x,x_i)&\leq& d^\phi(x,y)+d^\phi(y,z)+d^\phi(z,x_i)\\
&\leq& d_E(x,y)+\delta^\phi(y,z)+d_E(z,x_i)\\
&<& \frac{\epsilon}{4}+\frac{1}{1+a_m}+\frac{1}{1+a_k}+\frac{\epsilon}{4}\\
&<&\epsilon.
\end{eqnarray*}
\end{proof}

\indent
Since $(\mb{R}^s,d^\phi)$ is totally bounded, its completion
$(\overline{\mb{R}^s},\rho)=(\overline{\mb{R}^s},\rho_\phi)$ is a 
compactification of  $(\mathbf{R}^s,d_E)$, where we wrote simply $\rho$
to denote $\rho_\phi$ for the sake of simplicity. 
Recall that an element of $(\overline{\mb{R}^s},\rho)$ is an equivalence class
of Cauchy sequence in $(\mb{R}^s,d^\phi)$, where two Cauchy sequences $\{x_i\}$
and $\{y_i\}$ are equivalent if and only if
$$\lim_{i\to\infty}d^\phi(x_i,y_i)=0.$$

\noindent
Notice that if $\{x_i\}$ is a Cauchy sequence in $(\mb{R}^s,d^\phi)$ which
converges to $x$, then $\{x_i\}$ and the constant Cauchy sequence $\{x\}$ are
equivalent. Notice also that if $\{y_i\}$ is a subsequence of a Cauchy sequence
$\{x_i\}$, then they are equivalent.

\indent
Since for all $x\in S_1$, we have
$$d^\phi(a_ix,a_jx)\leq \delta^\phi(a_ix,a_jx)
\leq\frac{1}{1+a_i}+\frac{1}{1+a_j},$$
it is clear that $\{a_ix\}$ is a Cauchy sequence in $(\mb{R}^s,d^\phi)$.
By Lemma \ref{lem;useful}, we can show that $\{a_ix\}$ is not equivalent to
any constant Cauchy sequence (see the proof of Lemma \ref{lem;1-1}).
Furthermore, we have
\begin{lem}\label{lem;onto}
If $\{x_i\}$ is a Cauchy sequence in $(\mb{R}^s,d^\phi)$ which is not
equivalent to a constant Cauchy sequence, then it is equivalent to
$\{a_ix\}$ for some $x\in S_1$.
\end{lem}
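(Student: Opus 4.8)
The plan is to show that a non-convergent Cauchy sequence must escape to infinity, and that after passing to a subsequence its radial direction accumulates at a point $x\in S_1$ which pins down its equivalence class. I will only need one-sided (upper) estimates on $d^\phi$, obtained from explicit chains.

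First I would prove that $d_E(O,x_i)\to\infty$. If not, some subsequence lies in a Euclidean closed ball $\overline{B_R(O)}$, which is compact in $(\mb{R}^s,d_E)$ and hence, by Corollary \ref{cor;homeo}, compact in $(\mb{R}^s,d^\phi)$. That subsequence is itself $d^\phi$-Cauchy, so having a convergent sub-subsequence (sequential compactness) it converges, say to $p$; but then the whole sequence $\{x_i\}$, being Cauchy with a convergent subsequence, converges to $p$ and is therefore equivalent to the constant sequence $\{p\}$, contradicting the hypothesis. Hence $d_E(O,x_i)\to\infty$, and the integers $m(i)$ determined by $a_{m(i)}\le d_E(O,x_i)<a_{m(i)+1}$ satisfy $m(i)\to\infty$.

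Next I would project each $x_i$ radially onto the nearby sphere. Set $\tilde x_i=\frac{a_{m(i)}}{d_E(O,x_i)}\,x_i\in S_{m(i)}$ and $u_i=\frac{1}{a_{m(i)}}\tilde x_i=\frac{x_i}{d_E(O,x_i)}\in S_1$. Exactly as in eq. (\ref{eq;tb2}), and using eq. (\ref{eq;trivial}), $d^\phi(x_i,\tilde x_i)\le d_E(x_i,\tilde x_i)<\frac{1}{m(i)+1}\to 0$, so $\{\tilde x_i\}$ is Cauchy and equivalent to $\{x_i\}$. Since $S_1$ is compact in $(\mb{R}^s,d_E)$, I may pass to a subsequence $\{x_{i_k}\}$ with $u_{i_k}\to x$ for some $x\in S_1$. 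I would then bound $d^\phi(x_{i_k},a_{i_k}x)$ along the three-leg chain $x_{i_k}\to\tilde x_{i_k}\to a_{m(i_k)}x\to a_{i_k}x$: the first leg costs at most $\frac{1}{m(i_k)+1}$; the middle leg lies in $S_{m(i_k)}$, where $\phi$ is the Euclidean distance of the projections, so $\delta^\phi(\tilde x_{i_k},a_{m(i_k)}x)\le\frac{2}{1+a_{m(i_k)}}+d_E(u_{i_k},x)$; and the last leg is radial, where $h_{m(i_k),i_k}(a_{m(i_k)}x)=a_{i_k}x$ forces $\phi=0$, giving $\delta^\phi(a_{m(i_k)}x,a_{i_k}x)\le\frac{1}{1+a_{m(i_k)}}+\frac{1}{1+a_{i_k}}$. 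Summing and letting $k\to\infty$, every term tends to $0$, so $d^\phi(x_{i_k},a_{i_k}x)\to 0$, i.e. $\{x_{i_k}\}\sim\{a_{i_k}x\}$.

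Finally I would assemble the equivalences. As subsequences of Cauchy sequences are equivalent to the originals, $\{x_i\}\sim\{x_{i_k}\}$ and $\{a_ix\}\sim\{a_{i_k}x\}$; combining with $\{x_{i_k}\}\sim\{a_{i_k}x\}$ and transitivity of $\sim$ gives $\{x_i\}\sim\{a_ix\}$, as required. The step I expect to be the real obstacle is the behaviour of the radial directions: one is tempted to prove that $u_i$ converges outright, which would demand a lower bound comparing $d^\phi$ with angular distance for far-out points, together with delicate control of chains that dip toward $O$, where the radial projection is not Lipschitz. The plan sidesteps this entirely by extracting a convergent subsequence of $\{u_i\}$ from the compactness of $S_1$ and letting the equivalence-relation bookkeeping finish the argument, so that only the easy upper estimates above are ever needed.
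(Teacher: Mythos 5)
Your proof is correct and follows essentially the same route as the paper's: rule out boundedness via compactness of Euclidean balls together with Corollary \ref{cor;homeo}, extract a subsequence whose radial directions $x_i/d_E(O,x_i)$ converge to some $x\in S_1$, and then bound $d^\phi(x_{i_k},a_{i_k}x)$ from above by a short chain through the sphere $S_{m(i_k)}$ using only $\delta^\phi\leq\frac{1}{1+d_E(O,\cdot)}+\phi+\frac{1}{1+d_E(O,\cdot)}$. The only cosmetic difference is your extra radial leg from $a_{m(i_k)}x$ to $a_{i_k}x$ to align indices, where the paper instead stops at $y_i=a_{m}x$ and invokes the fact that a subsequence of a Cauchy sequence is equivalent to the original.
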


\begin{proof}
Suppose that $\{x_i\}$ is a Cauchy sequence in $(\mb{R}^s,d^\phi)$ which is
not equivalent to a constant Cauchy sequence.
If $\{x_i\}$ is bounded in  $(\mb{R}^s,d_E)$, then it has a convergent
subsequence $\{y_i\}$, which converges to a point $y$ in $(\mb{R}^s,d_E)$.
Notice that $\{y_i\}$ converges to $y$ in $(\mb{R}^s,d^\phi)$, too.
Therefore $\{x_i\}$ is equivalent to $\{y_i\}$, and hence to the constant
Cauchy sequence $\{y\}$. This is a contradiction.

\indent
Since $\{x_i\}$ is unbounded in $(\mb{R}^s,d_E)$, we can choose a subsequence
of $x_i$, which we will call $x_i$ again, such that
$$0<d_E(O,x_i)<d_E(O, x_{i+1})\quad\mbox{for all }i\in\mb{N}$$
and there exists at most one $x_i$ such that
$$a_m\leq d_E(O,x_i)<a_{m+1}$$
for each $m\in\mb{N}$. Notice that $m\to\infty$ as $i\to\infty$. Since
$$\frac{1}{d_E(O,x_i)}\, x_i\in S_1$$
for all $i\in\mb{N}$ and $(S_1,d_E)$ is compact, $x_i$ has a subsequence,
which we will call $x_i$ again, such that
$$\frac{x_i}{d_E(O,x_i)}\ \mbox{converges to }x
\mbox{ for some }x\in S_1.$$

\indent
Suppose that $a_m\leq d_E(O,x_i)<a_{m+1}$. Let $y_i=a_m x$.
Notice that $\{y_i\}$ is a subsequence of $\{a_i x\}$. Let
$$z_i=\frac{a_m}{d_E(O,x_i)}x_i.$$
Since $d_E(x_i,z_i)\leq\frac{1}{m+1}$, we have
\begin{eqnarray*}
&&\lim_{i\to\infty}d^\phi(x_i,y_i)\\
&\leq&\lim_{i\to\infty}\left(d^\phi(x_i,z_i)+d^\phi(z_i,y_i)\right)\\
&\leq&\lim_{i\to\infty}\left(d_E(x_i,z_i)+\delta^\phi(z_i,y_i)\right)\\
&\leq& \lim_{i\to\infty}\left(\frac{1}{1+m}+
\frac{1}{1+a_m}+d_E\left(\frac{x_i}{d_E(O,x_i)},x\right)
+\frac{1}{1+a_m}\right)\\
&=&0.
\end{eqnarray*}
Therefore $\{x_i\}$ and $\{y_i\}$ are equivalent, and thus
$\{x_i\}$ is equivalent to $\{a_ix\}$.
\end{proof}

\indent
To show that $(\overline{\mb{R}^s},\rho)$ is homeomorphic to $(B^s,d_E)$,
we define a function
$$h:(B^s,d_E)\to (\overline{\mb{R}^s},\rho)$$ as follows.

\begin{eqnarray*}
h(x)=\left\{
\begin{array}{cl}
\frac{1}{1-d_E(O,x)}\, x \mbox{ (the constant Cauchy sequence)}
&\mbox{ if }d_E(O,x)<1\\
\{a_i x\} &\mbox{ if }d_E(O,x)=1
\end{array}
\right.
\end{eqnarray*}

\indent
Notice that
$$h\left(\frac{1}{1+d_E(O,y)}\, y \right)=y$$
for all $y\in\mb{R}^s$. Therefore from Lemma \ref{lem;onto}, it is clear that
$h$ is surjective. We will need the following lemma to show that
$h$ is injective.
\begin{lem}\label{lem;t1}
Suppose that $d_E(O,x)\geq 1$ and $d_E(O,y)\geq 1$.
Let $(x_0,x_1,\cdots,x_m)\in\Gamma_{x,y}$ with $d_E(O,x_i)<1$ for all
$1\leq i\leq m-1$. Then
$$\sum^m_{i=1} \delta^\phi(x_{i-1},x_i)\geq d_E\left(\frac{x}{d_E(O,x)},
\frac{y}{d_E(O,y)}\right).$$
\end{lem}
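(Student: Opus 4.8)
The plan is to exploit the single structural feature of the path that the hypothesis guarantees: every intermediate vertex $x_i$ with $1\leq i\leq m-1$ lies strictly inside the open unit ball, hence on none of the spheres $S_p$ (each of radius $a_p\geq a_1=1$). First I would show that, when $m\geq 2$, each edge contributes exactly its Euclidean length, i.e. $\delta^\phi(x_{i-1},x_i)=d_E(x_{i-1},x_i)$ for every $i$. The point is that each consecutive pair $(x_{i-1},x_i)$ contains at least one vertex strictly inside the ball: the endpoint $x_1$ when $i=1$, the endpoint $x_{m-1}$ when $i=m$, and both when $1<i<m$. Since such a vertex lies on no sphere, neither the clause $h_{p,q}(x_{i-1})=x_i$ nor the clause $x_{i-1},x_i\in S_m$ in the definition of $\phi$ can occur, so $\phi(x_{i-1},x_i)=d_E(x_{i-1},x_i)$ by the ``otherwise'' clause; the minimum defining $\delta^\phi(x_{i-1},x_i)$ is then attained by its first argument and equals $d_E(x_{i-1},x_i)$.

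Granting this, the triangle inequality for $d_E$ gives
$$\sum_{i=1}^m\delta^\phi(x_{i-1},x_i)=\sum_{i=1}^m d_E(x_{i-1},x_i)\geq d_E(x,y).$$
It then remains to prove the purely Euclidean inequality $d_E(x,y)\geq d_E\left(\frac{x}{d_E(O,x)},\frac{y}{d_E(O,y)}\right)$, valid whenever $d_E(O,x)\geq 1$ and $d_E(O,y)\geq 1$. For this I would invoke the standard fact that the nearest-point projection onto the closed unit ball $B^s$ is nonexpansive (a projection onto a convex set is $1$-Lipschitz), together with the observation that on points of norm at least $1$ this projection is exactly the radial map $x\mapsto x/d_E(O,x)$ onto $S_1$. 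Composing the two inequalities yields the claim for $m\geq 2$.

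Finally I would dispose of the degenerate case $m=1$, a single edge $x_0=x$, $x_1=y$ with both endpoints possibly on or outside $S_1$, where the interiority argument does not apply; here I would check the three clauses of $\phi$ directly. If $\phi(x,y)=d_E(x,y)$, the bound follows exactly as before from the Euclidean projection inequality. If $h_{p,q}(x)=y$, then $x$ and $y$ are radially aligned, so $\frac{x}{d_E(O,x)}=\frac{y}{d_E(O,y)}$ and the right-hand side vanishes. If $x,y\in S_m$, then $\phi(x,y)=\frac{1}{a_m}d_E(x,y)=d_E\left(\frac{x}{a_m},\frac{y}{a_m}\right)$ is precisely the target value, and both arguments of the minimum defining $\delta^\phi(x,y)$ are at least this value, so $\delta^\phi(x,y)$ dominates it.

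The only genuinely geometric input is the nonexpansiveness of the projection onto $B^s$; the rest is bookkeeping. Accordingly, I expect the main (if modest) obstacle to be the first step, namely verifying carefully that the interiority of every intermediate vertex forces each edge into the ``otherwise'' clause of $\phi$, so that the $\phi$-weighted sum collapses to an honest Euclidean path length to which the projection estimate can be applied.
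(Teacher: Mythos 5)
Your proposal is correct and follows essentially the same route as the paper's proof: for $m\geq 2$ every edge collapses to its Euclidean length (since each consecutive pair contains a vertex off all the spheres $S_p$), and the $m=1$ case reduces to checking that $\phi(x,y)\geq d_E\bigl(\tfrac{x}{d_E(O,x)},\tfrac{y}{d_E(O,y)}\bigr)$ in each clause, together with the radial-projection inequality $d_E(x,y)\geq d_E\bigl(\tfrac{x}{d_E(O,x)},\tfrac{y}{d_E(O,y)}\bigr)$. You are in fact somewhat more careful than the paper, which asserts these two ingredients without justification.
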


\begin{proof}
Notice that we may assume
$$\frac{x}{d_E(O,x)}\neq\frac{y}{d_E(O,y)}.$$
If $m=1$ then
\begin{eqnarray*}
&&\sum^m_{i=1} \delta^\phi(x_{i-1},x_i)\\
&=&\delta^\phi(x,y)\\
&=& \min\left\{d_E(x,y), \frac{1}{1+d_E(O,x)}+\phi(x,y)
+\frac{1}{1+d_E(O,y)}\right\}\\
&\geq& \min\left\{d_E(x,y), \frac{1}{1+d_E(O,x)}+ d_E\left(\frac{x}{d_E(O,x)},
\frac{y}{d_E(O,y)}\right)+\frac{1}{1+d_E(O,y)}\right\}\\
&\geq&d_E\left(\frac{x}{d_E(O,x)},\frac{y}{d_E(O,y)}\right).
\end{eqnarray*}

\indent
Suppose that $m\neq 1$. Notice that
$$\delta^\phi(x_{i-1},x_i)=d_E(x_{i-1},x_i)\quad\mbox{for all }1\leq i\leq m$$
and therefore
$$\sum^m_{i=1}\delta^\phi(x_{i-1},x_i)\geq \sum^m_{i=1}d_E(x_{i-1},x_i)
\geq d_E(x,y)
\geq d_E\left(\frac{x}{d_E(O,x)},\frac{y}{d_E(O,y)}\right).$$
\end{proof}

\indent
Now we show that $h$ is injective.
\begin{lem}\label{lem;1-1}
$h$ is injective.
\end{lem}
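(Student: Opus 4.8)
The plan is to verify injectivity case by case, according to whether each of $x,y\in B^s$ is an interior point (with $d_E(O,x)<1$) or a boundary point (with $d_E(O,x)=1$), since $h$ treats the two kinds differently: interior points go to constant Cauchy sequences and boundary points to the sequences $\{a_ix\}$. Throughout I write $\pi(z)=z/d_E(O,z)$ for the radial projection of a point $z\neq O$ onto $S_1$, and I use that $d^\phi$ is a metric (Theorem \ref{thm;metric}) together with $\rho(\{x_i\},\{y_i\})=\lim_i d^\phi(x_i,y_i)$.

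If $x\neq y$ are both interior, then $h(x)$ and $h(y)$ are the constant sequences at $u=\frac{x}{1-d_E(O,x)}$ and $v=\frac{y}{1-d_E(O,y)}$, so $\rho(h(x),h(y))=d^\phi(u,v)$ and it suffices to show $u\neq v$. The radial map $x\mapsto x/(1-d_E(O,x))$ is injective on the open ball: taking norms reduces $u=v$ to $d_E(O,x)=d_E(O,y)$ by the strict monotonicity of $t\mapsto t/(1-t)$, and then to $x=y$. Hence $u\neq v$, so $d^\phi(u,v)>0$ because $d^\phi$ is a metric.

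For the mixed case (say $x$ interior, $y$ on the boundary), and simultaneously for the promised fact that $\{a_iy\}$ is not equivalent to any constant sequence, I would fix the target $u=h(x)$ (or an arbitrary $w\in\mb{R}^s$) and bound $d^\phi(u,a_iy)$ uniformly from above. The key is that $a_iy$ and $a_1y$ are radially aligned, so $h_{i,1}(a_iy)=a_1y$ and $\phi(a_iy,a_1y)=0$; hence $d^\phi(a_1y,a_iy)\le\delta^\phi(a_1y,a_iy)\le\frac{1}{1+a_1}+\frac{1}{1+a_i}\le1$, and therefore $d^\phi(u,a_iy)\le d^\phi(u,a_1y)+1$ stays bounded while $d_E(u,a_iy)\to\infty$. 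Thus for large $i$ we have $d^\phi(u,a_iy)\neq d_E(u,a_iy)$, so Lemma \ref{lem;useful} gives $d^\phi(u,a_iy)\ge\frac{1}{2(1+d_E(O,u))}$; letting $i\to\infty$ shows $\rho(h(x),h(y))\ge\frac{1}{2(1+d_E(O,u))}>0$.

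The main obstacle is the boundary-boundary case: for $x\neq y$ in $S_1$ I must show $\rho(\{a_ix\},\{a_iy\})=\lim_i d^\phi(a_ix,a_iy)>0$, and for this I will prove the clean bound $d^\phi(a_ix,a_iy)\ge d_E(x,y)$ for every $i$. Given any $(z_0,\dots,z_n)\in\Gamma_{a_ix,a_iy}$, let $0=j_0<j_1<\cdots<j_k=n$ be the indices of the ``far'' vertices, those with $d_E(O,z_j)\ge1$ (the endpoints qualify since $d_E(O,a_ix)=a_i\ge1$). Between consecutive far indices every vertex satisfies $d_E(O,\cdot)<1$, so I bound the cost of each subpath from $z_{j_l}$ to $z_{j_{l+1}}$ below by $d_E(\pi(z_{j_l}),\pi(z_{j_{l+1}}))$: when the subpath has length at least $2$ this is exactly Lemma \ref{lem;t1}, and when $j_{l+1}=j_l+1$ it follows by inspecting the definition of $\phi$ for the two far points, namely if they are radially aligned the projection distance is $0$; if they lie on a common sphere $S_m$ then $\delta^\phi$ is at least the projection distance (since $\phi$ equals it and $d_E=a_m$ times it with $a_m\ge1$); and otherwise $\delta^\phi=d_E$, which dominates the projection distance because radial projection onto $S_1$ is $1$-Lipschitz outside the unit ball. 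Summing over $l$ and telescoping by the triangle inequality on $S_1$ gives $\sum_{t=1}^{n}\delta^\phi(z_{t-1},z_t)\ge d_E(\pi(z_0),\pi(z_n))=d_E(x,y)$, and taking the infimum over $\Gamma_{a_ix,a_iy}$ yields $d^\phi(a_ix,a_iy)\ge d_E(x,y)>0$. I expect the only delicate points to be verifying the adjacent-far-pair inequality in each branch of the definition of $\phi$ and recording the non-expansiveness of radial projection; everything else is bookkeeping.
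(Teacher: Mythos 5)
Your proof is correct and follows essentially the same route as the paper: the norm computation for two interior points, Lemma \ref{lem;useful} applied to the divergence of $d_E(u,a_iy)$ versus the boundedness of $d^\phi(u,a_iy)$ in the mixed case, and Lemma \ref{lem;t1} to get $d^\phi(a_ix,a_iy)\geq d_E(x,y)$ in the boundary--boundary case. The only difference is one of completeness: the paper merely asserts ``using Lemma \ref{lem;t1}, we can show that $\sum\delta^\phi(x_{i-1},x_i)\geq d_E(x,y)$,'' whereas you actually carry out the decomposition of an arbitrary chain at its far vertices, check the adjacent-far-pair cases against the definition of $\phi$, and telescope --- a detail worth having written down.
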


\begin{proof}
Suppose that $h(x)=h(y)$. We will show that $x=y$.
If $d_E(O,x)<1$ and $d_E(O,y)<1$, then
\begin{equation}\label{eq;xy}
\frac{1}{1-d_E(O,x)}\, x=\frac{1}{1-d_E(O,y)}\, y
\end{equation}
and therefore
$$\frac{1}{1-d_E(O,x)}\,d_E(O,x)=\frac{1}{1-d_E(O,y)}\, d_E(O,y).$$
Hence $d_E(O,x)=d_E(O,y)$. Thus from eq. (\ref{eq;xy}), we have $x=y$.

\indent
If $d_E(O,x)=1$ and $d_E(O,y)=1$, then the Cauchy sequences $\{a_i x\}$ and
$\{a_i y\}$ are equivalent.
Suppose that $x\neq y$. We will get a contradiction. Let
$$(x_0,x_1,\cdots,x_m)\in\Gamma_{a_ix,a_iy}.$$
Using Lemma \ref{lem;t1}, we can show that
$$\sum^m_{i=1}\delta^{\phi}(x_{i-1},x_i)\geq d_E(x,y).$$
and therefore
\begin{equation}\label{eq;g1}
d^\phi(a_i x, a_i y)\geq d_E(x,y)>0\quad\mbox{for all }i.
\end{equation}
Hence $\lim_{i\to\infty}d^\phi(a_i x, a_i y)\neq 0$.
This is a contradiction.

\indent
Suppose that $d_E(O,x)<1$, $d_E(O,y)=1$ and 
$$\lim_{i\to\infty}d^\phi\left(\frac{1}{1-d_E(O,x)}\, x,\,
a_i y\right)=0.$$
We will get a contradiction. Notice that if $i$ is large enough, then
$$d^\phi\left(\frac{1}{1-d_E(O,x)}\, x,\,
a_i y\right)\neq d_E\left(\frac{1}{1-d_E(O,x)}\, x,\,a_i y\right).$$
Therefore by Lemma \ref{lem;useful}, for large enough $i$, we have
$$d^\phi\left(\frac{1}{1-d_E(O,x)}\, x,\,a_i y\right)\geq
\frac{1}{2\left(1+d_E\left(O,\frac{1}{1-d_E(O,x)}x\right)\right)}>0.$$
Hence
$$\lim_{i\to\infty}d^\phi\left(\frac{1}{1-d_E(O,x)}\, x,\,
a_i y\right)\neq0.$$
This is a contradiction.
\end{proof}

\indent
Since $h$ is bijective, we can consider its inverse function.
Recall Lemma \ref{lem;onto} and let
$$k:(\overline{\mb{R}^s},\rho)\to (B^s,d_E)$$ be the function defined by
\begin{eqnarray*}
k(\{x_i\})=\left\{
\begin{array}{cl}
\frac{1}{1+d_E(O,x)}\, x
&\mbox{ if }\{x_i\}=\{x\} \mbox{ is a constant Cauchy sequence} \\
x &\mbox{ if }x_i=a_i x\mbox{ for some }x\in S_1.
\end{array}
\right.
\end{eqnarray*}

It is easy to show that $k$ is the inverse function of $h$.
In the following two lemmas, we will show that $h$ and $k$ are continuous.
Therefore $(\overline{\mb{R}^s},\rho)$ is homeomorphic to $(B^s,d_E)$.

\begin{lem}\label{lem;conti}
$h$ is continuous.
\end{lem}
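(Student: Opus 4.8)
The plan is to prove sequential continuity: since both $(B^s,d_E)$ and $(\overline{\mb{R}^s},\rho)$ are metric spaces, it suffices to show that $\rho(h(w_n),h(x))\to 0$ whenever $d_E(w_n,x)\to 0$. I would split the argument according to whether the limit point $x$ lies in the open unit ball or on its boundary sphere. In the easy case $d_E(O,x)<1$, every $w_n$ close enough to $x$ also satisfies $d_E(O,w_n)<1$, so both images are constant Cauchy sequences and $\rho(h(w_n),h(x))=d^\phi\left(\frac{w_n}{1-d_E(O,w_n)},\frac{x}{1-d_E(O,x)}\right)$. Because the map $w\mapsto\frac{w}{1-d_E(O,w)}$ is continuous from the open ball into $(\mb{R}^s,d_E)$ and $d^\phi\leq d_E$ by eq. (\ref{eq;trivial}), this quantity tends to $0$.

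The substantive case is $d_E(O,x)=1$, where $h(x)=\{a_ix\}$ and the approaching points $w_n$ may be interior or boundary points. When $d_E(O,w_n)=1$, both $a_iw_n$ and $a_ix$ lie on $S_i$, so $\phi(a_iw_n,a_ix)=\frac{1}{a_i}d_E(a_iw_n,a_ix)=d_E(w_n,x)$, giving $d^\phi(a_iw_n,a_ix)\leq\delta^\phi(a_iw_n,a_ix)\leq\frac{2}{1+a_i}+d_E(w_n,x)$; letting $i\to\infty$ yields $\rho(h(w_n),h(x))\leq d_E(w_n,x)\to 0$.

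When $d_E(O,w_n)<1$, write $v_n=\frac{w_n}{1-d_E(O,w_n)}$, whose Euclidean norm $R_n$ tends to $\infty$. The key idea is to retract $v_n$ radially to the nearest sphere: I would choose $m=m(n)$ with $a_m\leq R_n<a_{m+1}$ and set $u_n=a_m\frac{w_n}{d_E(O,w_n)}\in S_m$, then bound $d^\phi(v_n,a_ix)$ by the triangle inequality through $u_n$ and $a_mx$. The first leg satisfies $d^\phi(v_n,u_n)\leq d_E(v_n,u_n)=R_n-a_m<\frac{1}{m+1}$; the middle leg, using $u_n,a_mx\in S_m$, is at most $\frac{2}{1+a_m}+d_E\left(\frac{w_n}{d_E(O,w_n)},x\right)$; and the last leg uses $h_{m,i}(a_mx)=a_ix$, so $\phi(a_mx,a_ix)=0$ and $d^\phi(a_mx,a_ix)\leq\frac{1}{1+a_m}+\frac{1}{1+a_i}$. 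Letting $i\to\infty$ and then $n\to\infty$, the $m$- and $a_m$-terms vanish because $R_n\to\infty$ forces $m\to\infty$, while $\frac{w_n}{d_E(O,w_n)}\to x$, so $\rho(h(w_n),h(x))\to 0$.

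The hard part will be this mixed boundary case, where a far-out constant sequence $\{v_n\}$ must be shown $\rho$-close to $\{a_ix\}$; it is precisely the radial retraction onto $S_m$, combined with the gluing property of $\phi$ along the spheres encoded by the maps $h_{p,q}$, that makes the three telescoping estimates collapse to zero. Once that path is fixed, the individual bounds are routine applications of eq. (\ref{eq;trivial}) and the definition of $\phi$.
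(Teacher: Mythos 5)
Your proposal is correct and follows essentially the same route as the paper: the same case split on whether $x$ is interior or boundary, the same direct estimate via $\phi$ on $S_i$ when the approaching points are on the sphere, and the same radial retraction to $S_m$ (your $u_n$ is the paper's $z_n$) with the identical three-term telescoping bound in the mixed case. No gaps.
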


\begin{proof}
Suppose that $x_n\to x$ in $(B^s,d_E)$. We will show that $h(x_n)\to h(x)$ in 
$(\overline{\mb{R}^s},\rho)$.
If $d_E(O,x)<1$, then it is trivial to show that $h(x_n)\to h(x)$ in
$(\mb{R}^s,d_E)$. Therefore from eq. (\ref{eq;trivial}), we have
$h(x_n)\to h(x)$ in $(\mb{R}^s,d^\phi)$, and hence in
$(\overline{\mb{R}^s},\rho)$.

\indent
Suppose that $d_E(O,x)=1$. Notice that it is enough to consider only the
following two cases,
\begin{enumerate}
\item[(a)]
$d_E(O,x_n)=1$ for all $n$
\item[(b)]
$d_E(O,x_n)<1$ for all $n$.
\end{enumerate}

\indent
For the case (a), we have
\begin{eqnarray*}
\rho(h(x_n),h(x))&=&\lim_{i\to\infty}d^\phi(a_i x_n, a_i x)\\
&\leq& \lim_{i\to\infty}\left(\frac{1}{1+a_i}+d_E(x_n,x)+\frac{1}{1+a_i}\right)
\\&=&d_E(x_n,x).
\end{eqnarray*}
Therefore if $x_n\to x$ in $(B^s,d_E)$, then $h(x_n)\to h(x)$ in
$(\overline{\mb{R}^s},\rho)$.

\indent
For the case (b), if
$$a_m\leq d_E(O,h(x_n))=d_E\left(O,\frac{1}{1-d_E(O,x_n)}\, x_n\right)
<a_{m+1},$$
let $$z_n=\frac{a_m}{d_E(O,h(x_n))}\, h(x_n)=\frac{a_m}{d_E(O,x_n)}\, x_n.$$
Notice that $z_n\in S_m$, and $m\to\infty$ as $n\to\infty$.
Therefore from eq. (\ref{eq;trivial}), we have
\begin{eqnarray*}
&&\lim_{n\to\infty}\rho(h(x_n),h(x))\\
&=&\lim_{n\to\infty}\lim_{i\to\infty}
d^\phi(h(x_n), a_i x)\\
&\leq&\lim_{n\to\infty}\lim_{i\to\infty}
 \left(d^\phi(h(x_n),z_n)+d^\phi(z_n,a_m x)+d^\phi(a_m x, a_i x)\right)\\
&\leq& \lim_{n\to\infty}\lim_{i\to\infty}
\left(d_E(h(x_n),z_n)+\delta^\phi(z_n,a_m x)+\delta^\phi(a_m x,a_i x)\right)\\
&\leq& \lim_{n\to\infty}\lim_{i\to\infty}
\left(\frac{1}{1+m}+\frac{1}{1+a_m}+ d_E\left(\frac{h(x_n)}{d_E(O,h(x_n))},
x\right)+\frac{1}{1+a_m}\right. \\
&&\qquad\qquad\qquad\qquad\qquad\qquad\qquad\qquad \left.
+\frac{1}{1+a_m}+\frac{1}{1+a_i}\right)\\
&\leq& \lim_{n\to\infty}d_E\left(\frac{x_n}{d_E(O,x_n)},x\right)\\
&=&0.
\end{eqnarray*}
Therefore $h(x_n)\to h(x)$ as $n\to\infty$.
\end{proof}

\begin{lem}\label{lem;iconti}
$k$ is continuous.
\end{lem}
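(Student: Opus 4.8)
The plan is to avoid a direct sequential argument and instead invoke the standard fact that a continuous bijection from a compact space onto a Hausdorff space is a homeomorphism. First I would observe that $(B^s,d_E)$ is compact, being a closed and bounded subset of $(\mb{R}^s,d_E)$, and that $(\overline{\mb{R}^s},\rho)$, being a metric space, is Hausdorff.

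Next I would assemble the facts already proved. By Lemma \ref{lem;onto} (which gives surjectivity of $h$) and Lemma \ref{lem;1-1} (which gives injectivity), the map $h$ is a bijection; by Lemma \ref{lem;conti}, $h$ is continuous. Thus $h:(B^s,d_E)\to(\overline{\mb{R}^s},\rho)$ is a continuous bijection from a compact space onto a Hausdorff space.

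The key step is then purely formal: such a map sends each closed, hence compact, subset of $B^s$ to a compact, hence closed, subset of $\overline{\mb{R}^s}$, so $h$ is a closed map and therefore a homeomorphism. Since $k=h^{-1}$, it follows immediately that $k$ is continuous.

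Because the real content was already carried out in establishing that $h$ is a continuous bijection, I expect no genuine obstacle on this route. The only alternative would be a direct proof mirroring Lemma \ref{lem;conti}: taking a convergent sequence of equivalence classes in $(\overline{\mb{R}^s},\rho)$ and showing that its $k$-images converge in $(B^s,d_E)$. The hard part of that approach would be the case analysis according to whether the limiting equivalence class is interior (a constant Cauchy sequence) or lies on the boundary sphere (of the form $\{a_i x\}$), together with the bookkeeping needed to control $d^\phi$-distances between equivalence classes and the limits in $i$ defining $\rho$; the compactness shortcut sidesteps all of this.
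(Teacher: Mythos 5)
Your proof is correct, but it takes a genuinely different route from the paper. The paper proves the continuity of $k$ directly: it takes a sequence $\mb{x}_n\to\mb{x}$ in $(\overline{\mb{R}^s},\rho)$ and runs a case analysis on whether $\mb{x}$ is a constant Cauchy sequence or of the form $\{a_ix\}$ with $x\in S_1$ (and correspondingly for the $\mb{x}_n$), using Lemma \ref{lem;useful} and eq. (\ref{eq;g1}) to control $d^\phi$-distances, and in the final case extracting a convergent subsequence in $B^s$ and appealing to the continuity and injectivity of $h$. Your argument instead observes that $h$ is a continuous bijection from the compact space $(B^s,d_E)$ onto the Hausdorff (metric) space $(\overline{\mb{R}^s},\rho)$, hence a closed map, hence a homeomorphism, so $k=h^{-1}$ is continuous. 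This is legitimate: all three inputs (surjectivity, injectivity, continuity of $h$) are established before this lemma, and the identification $k=h^{-1}$ is asserted by the paper just before the statement, so there is no circularity. Your route is shorter and sidesteps the entire case analysis; it also makes transparent why the boundary cases cause no trouble. What the paper's direct argument buys in exchange is explicit quantitative information (e.g., the lower bound $d^\phi(a_ix_n,x)\geq\frac{1}{2(1+d_E(O,x))}$ separating boundary points from interior points), and it exhibits concretely how convergence in $\rho$ forces convergence of the "directions" $x_n/d_E(O,x_n)$ --- but none of that extra information is needed for the statement as such. Note also that the paper's own proof already uses compactness implicitly when it extracts the convergent subsequence $\{y_n\}$ in case (b), so your approach merely promotes that hidden compactness argument to the main device.
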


\begin{proof}
Suppose that $\mb{x}_n=\{x_{n,i}\}$ converges to $\mb{x}=\{x_i\}$ in
$(\overline{\mb{R}^s},\rho)$.
We will show that $k(\mb{x}_n)$ converges to $k(\mb{x})$ in $(\mb{B}^s,d_E)$.

\indent
Suppose that $\mb{x}$ is equivalent to a constant Cauchy sequence $\{x\}$ in
$(\mb{R}^s,d^\phi)$.
If $\mb{x}_n$ is equivalent to $\{a_ix_n\}$ with $x_n\in S_1$ for infinitely 
many n, then choose a subsequence of $\mb{x}_n$, which we will call $\mb{x}_n$
again, such that $\mb{x}_n=\{a_ix_n\}$ with $x_n\in S_1$. Notice that
there exists $I>0$, which does not depend on $n$, such that
$$d_E(a_ix_n,x)\geq\frac{1}{2(1+d_E(O,x))}\quad\mbox{for all }i>I.$$
Therefore by Lemma \ref{lem;useful}, we have
$$d^\phi(a_ix_n,x)\geq\frac{1}{2(1+d_E(O,x))}\quad\mbox{for all }i>I.$$
Hence $\mb{x}_n$ does not converges to $\mb{x}$ in
$(\overline{\mb{R}^s},\rho)$. This is a contradiction.
Therefore $\mb{x}_n=\{x_n\}$ is a constant Cauchy sequence in
$(\mb{R}^s,d^\phi)$ for large enough $n$. 
Since $x_n$ converges to $x$ in $(\mb{R}^s,d^\phi)$, by Corollary 
\ref{cor;homeo}, $x_n$ converges to $x$ in $(\mb{R}^s,d_E)$. 
Therefore
$$k(\mb{x}_n)=\frac{1}{1+d_E(O,x_n)}\, x_n\quad\mbox{converges to}
\quad  k(\mb{x})=\frac{1}{1+d_E(O,x)}\, x.$$

\indent
If $\mb{x}=\{x_i\}$ is not equivalent to a constant Cauchy sequence in
$(\mb{R}^s,d^\phi)$, then by Lemma \ref{lem;onto}, we may assume
$x_i=a_i x$ for some $x\in S_1$. Notice that we may consider only the
following two cases.
\begin{enumerate}
\item[(a)]
For all $n$, $x_{n,i}=a_i x_n$ for some $x_n\in S_1.$
\item[(b)]
For all $n$, $\mb{x}_n=\{x_n\}$ is a constant Cauchy sequence.
\end{enumerate}

\indent
For the case (a), from eq. (\ref{eq;g1}) we have
\begin{eqnarray*}
0&=&\lim_{n\to\infty}\rho(\mb{x}_n,\mb{x})\\
&=&\lim_{n\to\infty}\lim_{i\to\infty}d^\phi(a_ix_n,a_ix)\\
&\geq&\lim_{n\to\infty}d_E(x_n,x)\\
&=&\lim_{n\to\infty}d_E(k(\mb{x}_n),k(\mb{x})).
\end{eqnarray*}

\indent
For the case (b), suppose that
$$\lim_{n\to\infty}d_E\left(\frac{1}{1+d_E(O,x_n)}\, x_n,x\right)\neq 0.$$
We will get a contradiction.
Choose a subsequence $\{y_n\}$ of $\{x_n\}$ such that
$$\frac{1}{1+d_E(O,y_n)}\, y_n\to y\neq x\quad\mbox{in}\quad (B^s,d_E).$$
Since $h$ is continuous and injective, we have
$$y_n=h\left(\frac{1}{1+d_E(O,y_n)}\, y_n\right)\to h(y)\neq h(x)=\mb{x}
\quad\mbox{in}\quad (\overline{\mb{R}^s},\rho).$$
Therefore $\lim_{n\to\infty}\rho(\mb{y}_n,\mb{x})\neq 0$.
This is a contradiction.
\end{proof}

\section{A compactification of $(\mb{R}^s,d_E)$ which
is not equivalent to the standard compactification}

Two compactifications $Y_1$ and $Y_2$ of a topological space $X$ are
called equivalent if there exists a homeomorphism $h:Y_1\to Y_2$
such that $h(x)=x$ for all $x\in X$. Recall that $s\geq 2$. In this section, 
we construct a compactification of $(\mb{R}^s,d_E)$ which is homeomorphic to 
the closed unit ball $(B^s,d_E)$, but not equivalent to the standard
compactification $(\overline{\mb{R}^s},\rho_\phi)$ in Section
\ref{sec;scpt}. We define a nonnegative symmetric function
$\psi:\mb{R}^s\times\mb{R}^s\to\mb{R}$ as follows.
Choose $0<\delta<\frac{\pi}{4}$ and let
$$A^+=\{x\in S_1\mid \angle xO\mb{a}_1\leq\delta\},\quad
A^-=\{x\in S_1\mid \angle xO(-\mb{a}_1)\leq\delta\},$$
where $\mb{a}_1=(1,0,\cdots,0)$ and $-\mb{a}_1=(-1,0,\cdots,0)\in\mb{R}^s$.
For each $x\in S_1$, let
$$P_x=\{t\mb{a}_1+t'x\in\mb{R}^s\mid t,t'\in\mb{R}\}.$$

\indent
We define an infinite ray $L_x\subset P_x$ starting from $x$ as follows.
See Figure \ref{fig}, where 
$$\theta=\frac{\pi}{\pi-2\delta}(\angle xO\mb{a}_1-\delta).$$

\begin{figure}[ht]
\begin{center}
\psfrag{O}{$O$}
\psfrag{L}{$L_x$}
\psfrag{x}{$x$}
\psfrag{y}{$y$}
\psfrag{a}{$\mb{a}_1$}
\psfrag{s}{$\theta$}
\psfrag{d}{$\delta$}
\includegraphics[width=3.2in,height=2in]{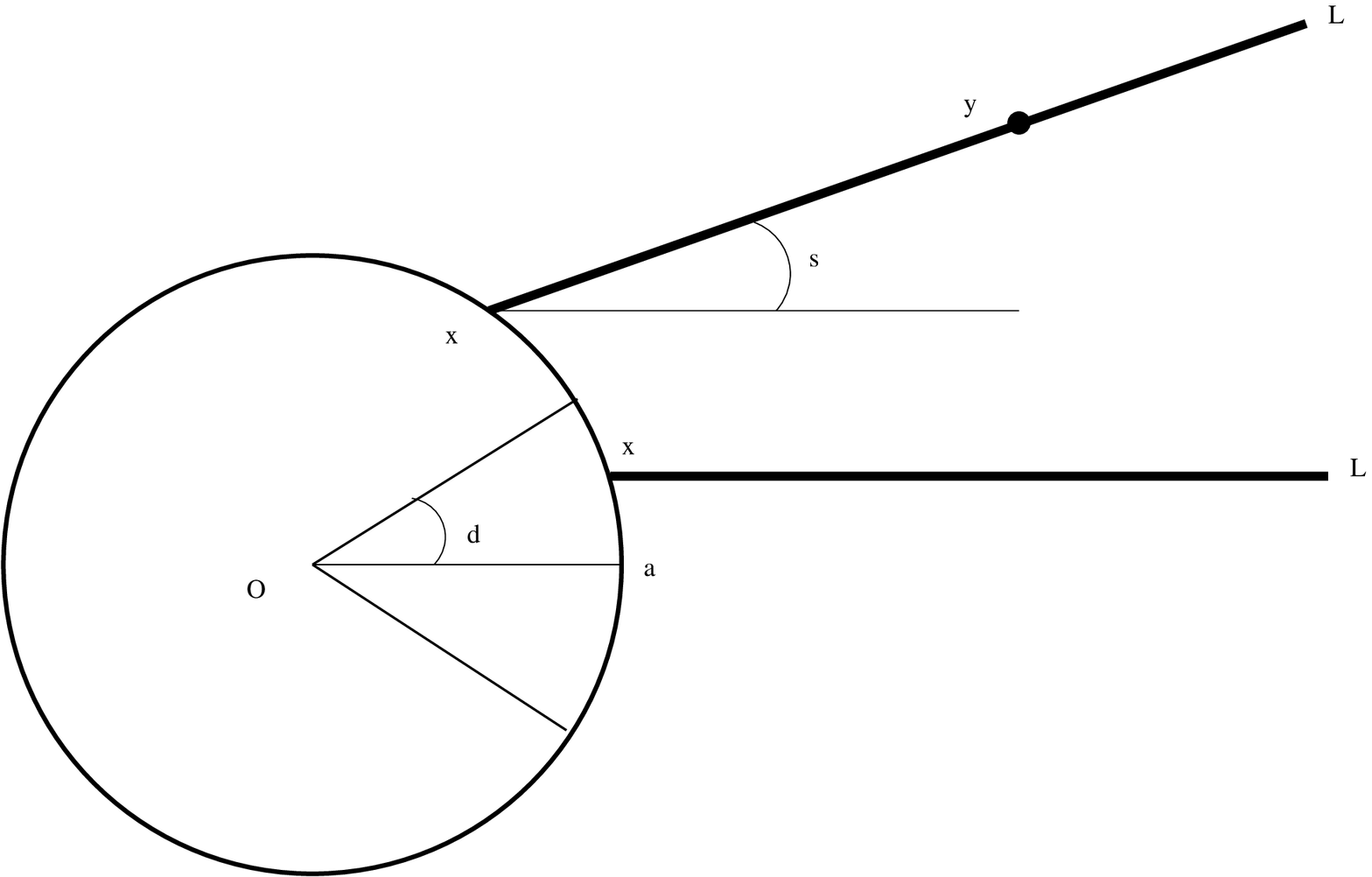}
\caption{$L_x$\label{fig}}
\end{center}
\end{figure}

\begin{eqnarray*}
L_x=\left\{
\begin{array}{cl}
\{x+t\mb{a}_1\mid t\geq 0\} &\mbox{ if }x\in A^+\\
\{x+t\mb{a}_1\mid t\leq 0\} &\mbox{ if }x\in A^-\\
\{x,y\in P_x\mid \angle (y-x)O\mb{a}_1=\frac{\pi}{\pi-2\delta}(\angle
xO\mb{a}_1-\delta)\} &\mbox{ if }x\in S_1\setminus (A^+\cup A^-)
\end{array}
\right.
\end{eqnarray*}
and let $L=\{L_x\mid x\in S_1\}$. Notice that 
\begin{enumerate}
\item[(i)]
If $\angle xO\mb{a}_1=\frac{\pi}{2}$, then 
$L_x=\{tx\mid t\geq1\}$.
\item[(ii)]
For all $x\in S_1$, the angle between two rays $L_x$ and $\{tx\mid t\geq1\}$
is not greater than $\delta$.
\item[(iii)]
For all $y\in \mb{R}^s$ with $d_E(O,y)\geq 1$, there exists 
unique ray in $L$ which is through $y$. 
\end{enumerate}

\indent
For all $p,q\in\mb{N}$, let 
$h_{p,q}:S_p\to S_q$ be the homeomorphism defined by
$$h_{p,q}(x)=\mbox{ the intersection of }S_q\mbox { and the ray in }L\mbox
{ which is through }x.$$
In particular, we have $h_{p,p}(x)=x$, and if $h_{p,q}(x)=y$ then 
$h_{q,p}(y)=x$. 
The nonnegative symmetric function $\psi$ is defined as follows.

\begin{defn}
\begin{eqnarray*}
\psi(x,y)=\left\{
\begin{array}{cl}
0 &\mbox{ if }\ h_{p,q}(x)=y\ \mbox{for some }p,q\in\mb{N}\\
d_E\left(h_{m,1}(x),h_{m,1}(y)\right) &\mbox{ if }\
x,y\in S_m \mbox{ for some }m\in\mb{N}\\
d_E(x,y) &\mbox{ otherwise.}
\end{array}
\right.
\end{eqnarray*}
\end{defn}

\indent
Similarly as in Section \ref{sec;scpt}, 
we can show that $(\mb{R}^s,d^\psi)=(\mb{R}^s,d_E^{\psi,O})$ is 
totally bounded, and its completion $(\overline{\mb{R}^s},\rho_\psi)$ is
homeomorphic to $(B^s,d_E)$ by the following homeomorphism
$h:(B^s,d_E)\to (\overline{\mb{R}^s},\rho_\psi)$,
\begin{eqnarray*}
h(x)=\left\{
\begin{array}{ll}
\frac{1}{1-d_E(O,x)}\, x & \mbox{ if }d_E(O,x)<\frac{1}{2}\\
y\in L_{\frac{x}{d_E(O,x)}}\mbox{ such that }&\\
\qquad
d_E\left(\frac{x}{d_E(O,x)},y\right)=\frac{d_E(O,x)-\frac{1}{2}}{1-d_E(O,x)} &
\mbox{ if }\frac{1}{2}\leq d_E(O,x)<1\\
\{h_{1,i}(x)\} &\mbox{ if }d_E(O,x)=1.
\end{array}
\right.
\end{eqnarray*}

\indent 
Suppose that $A,B\subset\mb{R}^s$. Let
$$d_E(A,B)=\inf\{d_E(x,y)\mid x\in A,\ y\in B\}.$$
In spherical coordinate system the distance between $(\rho_1,\phi_1,\theta_1)$ 
and $(\rho_2,\phi_2,\theta_2)$ is
\begin{equation}\label{eq;sqrt}
\sqrt{\rho_1^2+\rho_2^2-2\rho_1\rho_2\{\sin\phi_1\sin\phi_2\cos(
\theta_1-\theta_2)+\cos\phi_1\cos\phi_2\}}.
\end{equation}
The following two Lemmas are useful to show that $h$ is a homeomorphism.
\begin{lem}\label{lem;gd}
Suppose that $x,y\in S_1$. Then
$$d_E(L_x,L_y)\geq\frac{1}{2\sqrt{2}}\, d_E(x,y).$$
\end{lem}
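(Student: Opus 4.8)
The plan is to realize the infimum explicitly by parametrizing the two rays and bounding $d_E(p,q)$ from below for arbitrary $p\in L_x$ and $q\in L_y$. Write $p=x+su_x$ and $q=y+tu_y$ with $s,t\geq 0$, where $u_x,u_y$ are the unit direction vectors of $L_x,L_y$; by property (ii) each of $u_x,u_y$ makes an angle at most $\delta<\frac{\pi}{4}$ with the outward radial directions $x,y$. Set $D=d_E(x,y)$ and $e=(x-y)/D$, and let $\beta=\angle xOy$, so $D=2\sin\frac{\beta}{2}$. The first computation I would record is the pair of identities $\langle x,e\rangle=\tfrac{D}{2}$ and $\langle y,e\rangle=-\tfrac{D}{2}$, which follow from $|x|=|y|=1$ and $D^2=2-2\langle x,y\rangle$; equivalently, the angle between $x$ and $e$ is $\frac{\pi}{2}-\frac{\beta}{2}$.

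I would then split on the angular separation $\beta$. When $\beta\geq 2\delta$, the tilt bound gives $\langle u_x,e\rangle\geq\sin(\tfrac{\beta}{2}-\delta)\geq 0$ and, symmetrically, $\langle u_y,e\rangle\leq-\sin(\tfrac{\beta}{2}-\delta)\leq 0$. Projecting $p-q$ onto $e$ then yields
$$\langle p-q,e\rangle=D+s\langle u_x,e\rangle-t\langle u_y,e\rangle\geq D,$$
so $d_E(p,q)\geq\langle p-q,e\rangle\geq D$, which is far stronger than needed. This disposes of every pair $x,y$ that is not close on $S_1$.

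The remaining, and genuinely harder, case is $\beta<2\delta$, where $x$ and $y$ are angularly close and the two rays are nearly parallel. Here the projection onto $e$ can fail, because $\langle u_x,e\rangle$ may be slightly negative (as low as $-\sin\delta$): the rays may tilt toward one another, and the full lines carrying them may nearly meet. The point I would exploit is that any such near-meeting lies behind the basepoints, so the constraint $s,t\geq 0$ must genuinely be used. Concretely I would combine two facts. First, along each ray the distance to $O$ is strictly increasing, since $\langle u_x,x\rangle\geq\cos\delta>\tfrac{1}{\sqrt2}>0$; hence radial projection carries $L_x$ into the $\delta$-neighbourhood of $x$ and $L_y$ into the $\delta$-neighbourhood of $y$ on $S_1$. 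Second, by property (iii) the rays never cross, and the combing $x\mapsto u_x$ expands polar angles by the factor $\frac{\pi}{\pi-2\delta}>1$ while preserving azimuth, so the rays spread apart rather than converge; in particular two points of $L_x,L_y$ at a common radius $r\geq 1$ subtend at $O$ an angle at least $\beta$. Comparing such an equal-radius pair via the spherical-coordinate formula (\ref{eq;sqrt}) produces a chord of length at least $2\sin\frac{\beta}{2}=D$, and the constant $\frac{1}{2\sqrt2}$ is what absorbs both the loss incurred in reducing the unequal-radius pair $p,q$ to a common radius and the factor $\cos\delta>\tfrac{1}{\sqrt2}$ coming from the at-most-$\delta$ tilt.

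The main obstacle is exactly this close case, and within it the fully three-dimensional configuration: when $x$, $y$, and $\mb{a}_1$ are not coplanar the planes $P_x$ and $P_y$ differ, the two rays are skew, and I cannot reduce to a planar picture. I expect the cleanest route is to work throughout in spherical coordinates centred at $O$, using (\ref{eq;sqrt}) for $d_E(p,q)$, the azimuth-preserving and polar-angle-expanding description of the combing to lower-bound the subtended angle, and the monotonicity of the radius along each ray (together with property (iii)) to locate the closest points in a bounded initial portion and reduce to the equal-radius comparison. The factor $\frac{1}{2\sqrt2}$ should then emerge from a single minimization over the two radii subject to both being at least $1$.
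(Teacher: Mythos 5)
Your first case is correct and clean: for $\beta=\angle xOy\geq 2\delta$, writing $D=d_E(x,y)$ and $e=(x-y)/D$, one does get $\langle u_x,e\rangle\geq\sin(\beta/2-\delta)\geq 0$ and $\langle u_y,e\rangle\leq-\sin(\beta/2-\delta)\leq 0$ from property (ii), hence $d_E(p,q)\geq\langle p-q,e\rangle\geq D$ for all $p\in L_x$, $q\in L_y$. That disposes of well-separated pairs more transparently than the paper does.

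The gap is in the case $\beta<2\delta$, which you yourself flag as the hard one, and the claim you lean on there is false. You assert that two points of $L_x$ and $L_y$ at a common radius $r\geq 1$ subtend at $O$ an angle at least $\beta$, because the combing ``expands polar angles.'' But take $x,y\in A^+$ with equal azimuth and distinct polar angles: both rays are translates of each other in the direction $\mb{a}_1$, the points at radius $r$ on the two rays both converge in direction to $\mb{a}_1$, and the angle they subtend at $O$ tends to $0$ as $r\to\infty$. The map $\alpha\mapsto\frac{\pi}{\pi-2\delta}(\alpha-\delta)$ is expanding only on $[\delta,\pi-\delta]$ and is collapsed to a constant on $A^+\cup A^-$, which is exactly where pairs with $\beta<2\delta$ can sit; so the intended mechanism (angle $\geq\beta$ times radius $\geq 1$ gives chord $\geq D$) breaks precisely in the regime it is supposed to cover. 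Moreover the two remaining reductions --- locating the closest pair and passing from unequal to equal radii at a controlled loss, and then extracting $\frac{1}{2\sqrt2}$ --- are stated as expectations (``should emerge,'' ``I expect'') rather than carried out, and that is where all the work is. The paper instead reduces to a $3$-dimensional subspace containing $O$, $\mb{a}_1$, $x$, $y$, splits the displacement from $x$ to $y$ into an azimuthal and a polar part via the auxiliary points $z=(1,\phi_1,\theta_2)$ and $w=(1,\phi_2,\theta_1)$, handles the azimuthal part by projecting the rays onto a plane perpendicular to $\mb{a}_1$, and handles the polar part by comparing $L_x$ with the parallel translate of $L_w$ through $x$; the factors $\frac12$ and $\frac{1}{\sqrt2}$ come from that decomposition. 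You would need an argument of comparable concreteness for the near-parallel case before this counts as a proof.
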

\begin{proof}

\begin{figure}[ht]
\begin{center}
\psfrag{a}{$\mb{a}_1$}
\psfrag{al}{$\alpha$}
\psfrag{b}{$\beta$}
\psfrag{O}{$O$}
\psfrag{L1}{$L_x$}
\psfrag{L2}{$L^*_x$}
\psfrag{L3}{$L_w$}
\psfrag{x}{$x$}
\psfrag{w}{$w$}
\psfrag{p}{$\phi_2$}
\includegraphics[width=2.3in,height=2.3in]{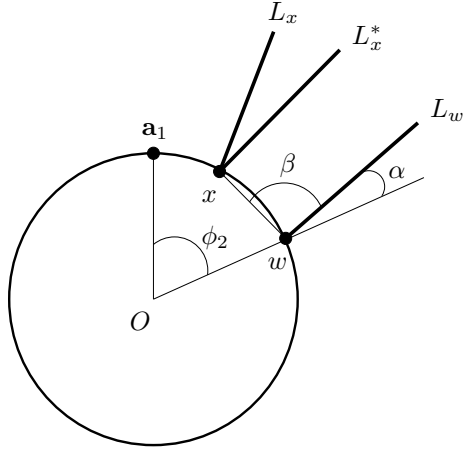}
\caption{Since $0\leq\alpha\leq\delta<\frac{\pi}{4}$, we have
$\frac{\pi}{4}<\beta\leq\frac{3\pi}{4}$.
\label{fig2}}
\end{center}
\end{figure}

\noindent
We may assume that $x\neq y$.
Since there exists a 3-dimensional subspace which contains $O$, $\mb{a}_1$, 
$-\mb{a}_1$, $x$ and $y$, we may assume that $\mb{R}^s=\mb{R}^3$. 
In spherical coordinates $(\rho,\phi,\theta)$, let 
$O=(0,0,0)$, $\mb{a}_1=(1,0,0)$, $-\mb{a}_1=(1,\pi,0)$, $x=(1,\phi_1,\theta_1)$
and $y=(1,\phi_2,\theta_2)$. By exchanging $x$ and $y$ if necessary, we may 
assume that 
$$0\leq\phi_1\leq\frac{\pi}{2}\quad\mbox{and}\quad\phi_1\leq\phi_2.$$

\indent
Suppose that $\phi_2\leq\frac{\pi}{2}$.
Let $z=(1,\phi_1,\theta_2)$ and $w=(1,\phi_2,\theta_1)$.
Since $d_E(x,y)\leq d_E(x,z)+d_E(z,y)$ and  $d_E(x,w)=d_E(z,y)$, we have
$$d_E(x,z)\geq\frac{1}{2}d_E(x,y)\quad\mbox{or}\quad
d_E(x,w)\geq\frac{1}{2}d_E(x,y).$$
If $d_E(x,z)\geq\frac{1}{2}d_E(x,y)$, let $P$ be the plane containing $x$ and 
$z$ which is perpendicular to $\mb{a}_1$.
Let $L'_x$ be the projection of $L_x$ to the plane $P$ and so is $L'_y$.  
Notice that we have 
$$d_E(L_x,L_y)\geq d_E(L'_x,L'_y)\geq d_E(x,z)\geq\frac{1}{2}\, d_E(x,y).$$
Suppose that $d_E(x,w)\geq\frac{1}{2}d_E(x,y)$. 
Let $L_x^*$ be the ray starting from $x$ with the same direction as $L_w$.
Since $L_w=\{(\rho,\phi,\theta_1)\mid (\rho,\phi,\theta_2)\in L_y\}$,
from eq. (\ref{eq;sqrt}) and Figure \ref{fig2}, we have
$$d_E(L_x,L_y)\geq d_E(L_x,L_w)\geq d_E(L_x^*,L_w)\geq \frac{1}{\sqrt{2}}\, 
d_E(x,w)\geq \frac{1}{2\sqrt{2}}\, d_E(x,y).$$

\begin{figure}[ht]
\begin{center}
\psfrag{a}{$\alpha$}
\psfrag{O}{$O$}
\psfrag{L1}{$L''_x$}
\psfrag{L2}{$L''_y$}
\psfrag{L3}{$L^{**}_x$}
\psfrag{L4}{$L^{**}_y$}
\psfrag{L5}{$L^*_{z'}$}
\psfrag{x}{$x$}
\psfrag{y}{$y$}
\psfrag{z}{$z'$}
\includegraphics[width=2.5in,height=3in]{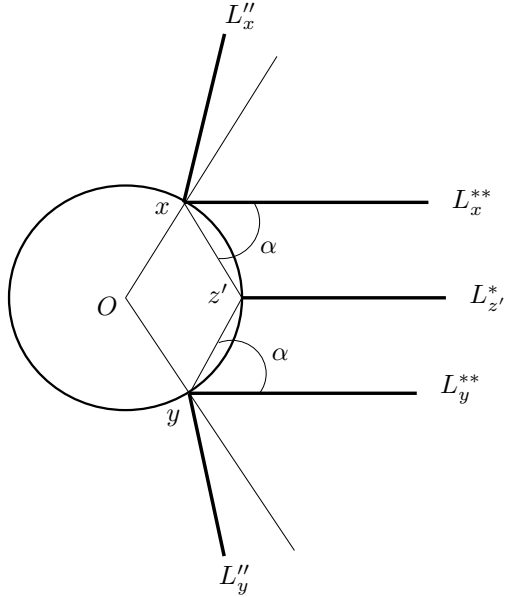}
\caption{$\frac{\pi}{4}\leq\alpha\leq\frac{\pi}{2}$\label{fig3}}
\end{center}
\end{figure}

\indent
Suppose that $\phi_2>\frac{\pi}{2}$.
Let $P'$ be the plane which contains the greatest circle in $S_1$  
through the points $x$ and $y$. 
Let $z'$ be the point on the greatest circle such that 
$$\angle xOz'=\angle yOz'\leq\frac{\pi}{2}.$$
Let $L''_x$ be the projection of $L_x$ to the plane $P'$ and so is $L''_y$. 
Let $$L^*_{z'}=\{tz'\mid t\geq 1\}.$$
Let $L^{**}_x$ be the ray from $x$ to the direction of $L^*_{z'}$ and so is 
$L^{**}_y$. From Figure \ref{fig3}, we have 
\begin{eqnarray*}
d_E(L_x,L_y)&\geq& d_E(L''_x,L''_y)\\
&\geq& d_E(L^{**},L^{**}_y)\\
&=& d_E(L^{**}_x,L^*_{z'})+d_E(L^*_{z'},L^{**}_y)\\
&\geq& \frac{1}{\sqrt{2}}\, d_E(x,z')+ \frac{1}{\sqrt{2}}\, d_E(z',y)\\
&\geq& \frac{1}{\sqrt{2}}\, d_E(x,y).
\end{eqnarray*}
\end{proof}

\begin{lem}
Suppose that $a_m\leq d_E(O,x)<a_{m+1}$.
Let $y$ be the intersection of $S_m$ and the ray in $L$ which is through $x$.
Then we have
$$d_E(y,x)\leq\frac{1}{\cos\delta}\, \frac{1}{m+1}\leq\frac{\sqrt{2}}{m+1}.$$
\end{lem}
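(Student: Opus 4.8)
The plan is to reduce the estimate to a planar computation on the single straight ray that carries both $x$ and $y$, and then to control the rate at which distance from $O$ grows as one travels outward along that ray.

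First I would set $w=\frac{1}{d_E(O,x)}\,x\in S_1$. By property (iii) the unique ray of $L$ through $x$ is $L_w$, and $y$ is by definition the intersection of this same ray with $S_m$; hence both $x$ and $y$ lie on the straight ray $L_w\subset P_w$, and $d_E(x,y)$ is exactly the length of the segment of $L_w$ between them. Let $v$ be the unit direction vector of $L_w$ and let $\alpha$ denote the angle between $v$ and $w$, so that by property (ii) we have $\alpha\leq\delta$. Since $O$, $w$, the direction $v$, and the radial direction of every point of $L_w$ all lie in the plane $P_w$, I would carry out the whole argument inside $P_w\cong\mb{R}^2$.

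The crux is the geometric claim that at every point $p$ of $L_w$ the angle $\theta$ between $v$ and the outward radial direction $p/d_E(O,p)$ satisfies $\theta\leq\alpha$, hence $\theta\leq\delta$. Parametrizing $p(s)=w+sv$ for $s\geq 0$ and using $|w|=|v|=1$ and $\langle w,v\rangle=\cos\alpha$, one computes
$$\cos\theta(s)=\frac{\langle p(s),v\rangle}{d_E(O,p(s))}=\frac{\cos\alpha+s}{\sqrt{1+2s\cos\alpha+s^2}}.$$
Squaring reduces the desired inequality $\cos\theta(s)\geq\cos\alpha$ to
$$\sin^2\alpha\,(2s\cos\alpha+s^2)\geq 0,$$
which holds for all $s\geq 0$ because $\alpha<\frac{\pi}{4}$ gives $\cos\alpha>0$. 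This says that although $L_w$ is curved as a member of the family $L$, it only bends toward the radial direction as one moves outward; establishing this monotonicity cleanly is the step I expect to require the most care.

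Finally I would convert the angle bound into the distance estimate. Because $\frac{d}{ds}\,d_E(O,p(s))=\cos\theta(s)\geq\cos\delta$ all along the ray, and because $1=a_1\leq a_m=d_E(O,y)\leq d_E(O,x)$ places $y$ and then $x$ in that order on $L_w$, integrating from $y$ to $x$ gives
$$d_E(O,x)-a_m=d_E(O,x)-d_E(O,y)\geq\cos\delta\cdot d_E(x,y).$$
Using $d_E(O,x)<a_{m+1}$ together with $a_{m+1}-a_m=\frac{1}{m+1}$ then yields
$$d_E(x,y)\leq\frac{d_E(O,x)-a_m}{\cos\delta}<\frac{1}{\cos\delta}\,\frac{1}{m+1},$$
and $\delta<\frac{\pi}{4}$ forces $\cos\delta>\frac{1}{\sqrt2}$, hence $\frac{1}{\cos\delta}<\sqrt2$, which completes the claimed bound.
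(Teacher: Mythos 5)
Your strategy is sound and in fact supplies the rigor that the paper omits: the paper's own proof of this lemma consists of the remark that $0<\delta<\frac{\pi}{4}$ and a reference to Figure 4, whose caption $\beta<\alpha\leq\delta<\frac{\pi}{4}$ encodes exactly the monotonicity you establish analytically (the angle between the ray and the outward radial direction does not increase as one moves outward along the ray). Your computation of $\cos\theta(s)$, the reduction of $\cos\theta(s)\geq\cos\alpha$ to $\sin^2\alpha\,(2s\cos\alpha+s^2)\geq 0$, and the integration of $\frac{d}{ds}\,d_E(O,p(s))=\cos\theta(s)\geq\cos\delta$ across the shell between $S_m$ and $S_{m+1}$ of radial width $a_{m+1}-a_m=\frac{1}{m+1}$ together form a complete and correct argument, finished off correctly by $\cos\delta>\frac{1}{\sqrt{2}}$.

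There is, however, one misstatement in your setup that you must repair: the unique ray of $L$ through $x$ is \emph{not} $L_w$ with $w=\frac{1}{d_E(O,x)}\,x$. Property (iii) only guarantees that some $L_{x'}$ with $x'\in S_1$ passes through $x$, and $L_{x'}$ is radial only when $\angle x'O\mb{a}_1=\frac{\pi}{2}$ (property (i)); for instance, for $x'\in A^+$ the ray $L_{x'}$ is parallel to $\mb{a}_1$, so its points are not radially aligned with $x'$. Hence the base point $x'$ generally differs from $x/d_E(O,x)$, and the parametrized ray $\{w+sv\mid s\geq 0\}$ with your choice of $w$ need not contain $x$ or $y$ at all, which would make the integration from $y$ to $x$ along that parametrization vacuous. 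The repair is immediate: take $w=x'$ to be the actual base point of the unique ray of $L$ through $x$. Property (ii) still gives $\alpha=\angle(v,x')\leq\delta$; both $y$ and $x$ lie on $\{x'+sv\mid s\geq 0\}$ in the order you claim because $|p(s)|$ is strictly increasing from $|p(0)|=1=a_1\leq a_m$ (its derivative is $(\cos\alpha+s)/|p(s)|>0$); and the rest of your argument goes through verbatim.
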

\begin{proof}
Recall that $0<\delta<\frac{\pi}{4}$. From Figure \ref{fig4}, the proof is 
trivial.
\begin{figure}[ht]
\begin{center}
\psfrag{a}{$\alpha$}
\psfrag{b}{$\beta$}
\psfrag{O}{$O$}
\psfrag{L}{$L_z$}
\psfrag{x}{$x$}
\psfrag{y}{$y$}
\psfrag{z}{$z$}
\psfrag{S}{$S_1$}
\psfrag{S1}{$S_m$}
\psfrag{S2}{$S_{m+1}$}
\psfrag{d}{$\delta$}
\includegraphics[width=3.3in,height=2in]{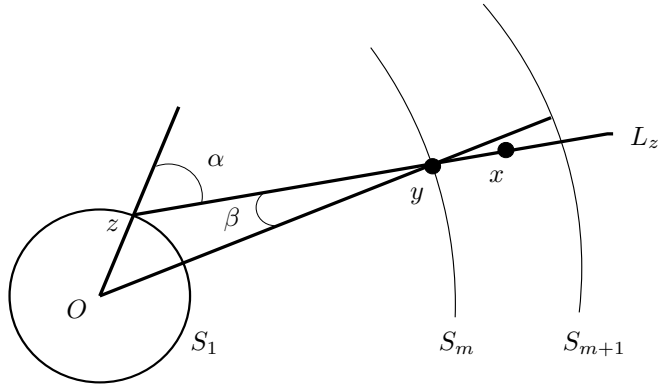}
\caption{$\beta<\alpha\leq\delta<\frac{\pi}{4}$\label{fig4}}
\end{center}
\end{figure}
\end{proof}

\indent
The following Lemma is also useful to show that $h$ is a homeomorphism.
Similarly as Lemma \ref{lem;t1}, we can prove this lemma.  
\begin{lem}\label{lem;t2}
Suppose that $d_E(O,x)\geq 1$ and $d_E(O,y)\geq 1$.
Suppose also that $x\in L_{x'}$ and $y\in L_{y'}$ with $x',y'\in S_1$.
Let $(x_0,x_1,\cdots,x_m)\in\Gamma_{x,y}$ with $d_E(O,x_i)<1$ for all
$1\leq i\leq m-1$. Then
$$\sum^m_{i=1} \delta^\psi(x_{i-1},x_i)\geq d_E\left(L_{x'},L_{y'}\right).$$
\end{lem}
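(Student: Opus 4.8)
The plan is to mimic the proof of Lemma \ref{lem;t1}, replacing the radial projection $z\mapsto z/d_E(O,z)$ onto $S_1$ by the assignment that sends a point $z$ with $d_E(O,z)\ge 1$ to the unique ray $L_{z'}\in L$ through it (property (iii)), and replacing $\phi$ by $\psi$. The two facts that drive the argument are: first, any step of the path touching the open unit ball is \emph{honest}, that is $\delta^\psi(u,v)=d_E(u,v)$ whenever $d_E(O,u)<1$ or $d_E(O,v)<1$; and second, for points outside the open unit ball the function $\psi$ already dominates the distance between the corresponding rays, namely $\psi(u,v)\ge d_E(L_{u'},L_{v'})$ whenever $d_E(O,u),d_E(O,v)\ge 1$.

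I would first establish the second fact by unwinding the three clauses defining $\psi$. If $h_{p,q}(u)=v$ then $u$ and $v$ lie on a common ray, so $L_{u'}=L_{v'}$ and both sides vanish. If $u,v\in S_m$ then $h_{m,1}(u)=u'$ and $h_{m,1}(v)=v'$, so $\psi(u,v)=d_E(u',v')\ge d_E(L_{u'},L_{v'})$ because $u'\in L_{u'}$ and $v'\in L_{v'}$. In the remaining case $\psi(u,v)=d_E(u,v)\ge d_E(L_{u'},L_{v'})$, again because $u\in L_{u'}$ and $v\in L_{v'}$; note that, unlike in Lemma \ref{lem;t1}, here the rays pass through the points themselves, so no contraction-type estimate is needed. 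The first fact is equally quick: a point strictly inside the unit ball lies on no sphere $S_m$ (each of radius $a_m\ge 1$), so neither of the first two clauses of $\psi$ can apply to a step having such an endpoint, whence $\psi(u,v)=d_E(u,v)$ and $\delta^\psi(u,v)=\min\{d_E(u,v),\ \frac{1}{1+d_E(O,u)}+d_E(u,v)+\frac{1}{1+d_E(O,v)}\}=d_E(u,v)$.

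With these in hand I would split into the two cases of Lemma \ref{lem;t1}. When $m=1$ the sum is the single term $\delta^\psi(x,y)=\min\{d_E(x,y),\ \frac{1}{1+d_E(O,x)}+\psi(x,y)+\frac{1}{1+d_E(O,y)}\}$; both entries of the minimum are at least $d_E(L_{x'},L_{y'})$, the first since $x\in L_{x'}$ and $y\in L_{y'}$, the second by the fact just proved, so $\delta^\psi(x,y)\ge d_E(L_{x'},L_{y'})$. When $m\ge 2$ every step $(x_{i-1},x_i)$ has an endpoint with $d_E(O,\cdot)<1$ (the points $x_1,\dots,x_{m-1}$ all lie inside the unit ball), so $\delta^\psi(x_{i-1},x_i)=d_E(x_{i-1},x_i)$ for every $i$; the triangle inequality for $d_E$ then gives $\sum_{i=1}^m\delta^\psi(x_{i-1},x_i)\ge d_E(x,y)\ge d_E(L_{x'},L_{y'})$, the last inequality once more because $x\in L_{x'}$ and $y\in L_{y'}$.

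The only point demanding care --- the analog of the single genuine inequality in Lemma \ref{lem;t1} --- is the middle clause $u,v\in S_m$ of the second fact, where one must recognize $h_{m,1}(u)=u'$, $h_{m,1}(v)=v'$ and invoke $d_E(u',v')\ge d_E(L_{u'},L_{v'})$. Everything else reduces to the observation that the rays $L_{x'}$ and $L_{y'}$ actually contain $x$ and $y$, so I expect no serious obstacle beyond keeping the case analysis on the definition of $\psi$ organized.
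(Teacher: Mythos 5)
Your proof is correct and is exactly the adaptation of the proof of Lemma \ref{lem;t1} that the paper has in mind (the paper omits the details, saying only that the lemma is proved similarly). The case split on $m=1$ versus $m\geq 2$, the verification that $\psi(u,v)\geq d_E(L_{u'},L_{v'})$ clause by clause, and the observation that steps with an endpoint inside the unit ball satisfy $\delta^\psi=d_E$ all match the intended argument.
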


\indent
Now we show that the compactification $(\overline{\mb{R}^s},\rho_\psi)$ of
$(\mb{R}^s,d_E)$ is not equivalent to the standard compactification
$(\overline{\mb{R}^s},\rho_\phi)$ in Section \ref{sec;scpt}.
\begin{prop}\label{prop;noteq}
$(\overline{\mb{R}^s},\rho_\psi)$ and $(\overline{\mb{R}^s},\rho_\phi)$ are not
equivalent compactifications.
\end{prop}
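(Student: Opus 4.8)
The plan is to argue by contradiction. Suppose there is a homeomorphism $H:(\overline{\mb{R}^s},\rho_\phi)\to(\overline{\mb{R}^s},\rho_\psi)$ with $H(x)=x$ for every $x\in\mb{R}^s$. Since $H$ is continuous and fixes $\mb{R}^s$ pointwise, any sequence $\{r_n\}$ in $\mb{R}^s$ with $r_n\to\xi$ in $\rho_\phi$ satisfies $r_n=H(r_n)\to H(\xi)$ in $\rho_\psi$; hence two sequences in $\mb{R}^s$ that share a $\rho_\phi$-limit are forced to share a $\rho_\psi$-limit. I will manufacture a contradiction by producing two sequences with a common limit in the standard compactification but distinct limits in $(\overline{\mb{R}^s},\rho_\psi)$.

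Because $s\geq2$ and $\delta>0$, the spherical cap $A^+$ contains two distinct points $x_1\neq x_2$. I would take the parallel horizontal rays $L_{x_1}=\{x_1+t\mb{a}_1:t\geq0\}$ and $L_{x_2}=\{x_2+t\mb{a}_1:t\geq0\}$ and set $p_n=x_1+n\mb{a}_1$ and $q_n=x_2+n\mb{a}_1$. Both sequences are unbounded in $(\mb{R}^s,d_E)$ and have the same limiting direction, $p_n/d_E(O,p_n)\to\mb{a}_1$ and $q_n/d_E(O,q_n)\to\mb{a}_1$.

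First I would show $p_n\to h_\phi(\mb{a}_1)$ and $q_n\to h_\phi(\mb{a}_1)$ in the standard compactification, reusing the estimate of Lemma \ref{lem;onto}: choosing $m_n$ with $a_{m_n}\leq d_E(O,p_n)<a_{m_n+1}$ and projecting $p_n$ radially to $y_n\in S_{m_n}$, eq. (\ref{eq;trivial}) gives $d^\phi(p_n,y_n)\leq d_E(p_n,y_n)<1/(m_n+1)$, while $\delta^\phi(y_n,a_{m_n}\mb{a}_1)\leq 2/(1+a_{m_n})+d_E(y_n/a_{m_n},\mb{a}_1)\to0$ since $y_n/a_{m_n}\to\mb{a}_1$; thus $\{p_n\}$ is equivalent to the subsequence $\{a_{m_n}\mb{a}_1\}$ of $h_\phi(\mb{a}_1)=\{a_i\mb{a}_1\}$, and likewise for $\{q_n\}$. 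Next I would show, in $(\overline{\mb{R}^s},\rho_\psi)$, that $p_n\to h_\psi(x_1)$ and $q_n\to h_\psi(x_2)$ with $h_\psi(x_1)\neq h_\psi(x_2)$. Convergence follows by the same projection argument, now exploiting that $\psi=0$ along a fixed ray of $L$: projecting $p_n\in L_{x_1}$ onto $h_{1,m_n}(x_1)\in L_{x_1}\cap S_{m_n}$ (the distance is at most $\sqrt2/(m_n+1)$ by the estimate preceding Lemma \ref{lem;t2}) shows $\{p_n\}$ equivalent to the defining subsequence of $h_\psi(x_1)=\{h_{1,i}(x_1)\}$. For distinctness, $\rho_\psi(h_\psi(x_1),h_\psi(x_2))=\lim_i d^\psi(h_{1,i}(x_1),h_{1,i}(x_2))$, and the $\psi$-analogue of eq. (\ref{eq;g1}), obtained from Lemma \ref{lem;t2} exactly as (\ref{eq;g1}) is obtained from Lemma \ref{lem;t1}, gives $d^\psi(h_{1,i}(x_1),h_{1,i}(x_2))\geq d_E(L_{x_1},L_{x_2})$, which by Lemma \ref{lem;gd} is at least $\frac{1}{2\sqrt2}\,d_E(x_1,x_2)>0$.

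Finally, applying $H$ to the conclusions of the previous paragraph yields $p_n\to H(h_\phi(\mb{a}_1))$ and $q_n\to H(h_\phi(\mb{a}_1))$ in $\rho_\psi$, so uniqueness of limits in the Hausdorff space $(\overline{\mb{R}^s},\rho_\psi)$ forces $h_\psi(x_1)=h_\psi(x_2)$, contradicting the distinctness just established. The routine convergence verifications are modeled on the arguments already in Section \ref{sec;scpt}; the genuinely essential step, and the one I expect to be the crux, is the separation $h_\psi(x_1)\neq h_\psi(x_2)$, where the geometry of the rays $L_x$ via Lemma \ref{lem;gd} distinguishes the $\psi$-boundary from the purely radial $\phi$-boundary along which $p_n$ and $q_n$ collapse to a single point.
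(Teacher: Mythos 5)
The proposal is correct and follows essentially the same argument as the paper: two rays of $L$ asymptotic to the direction $\mb{a}_1$ determine distinct boundary points of $(\overline{\mb{R}^s},\rho_\psi)$ (separated via Lemmas \ref{lem;gd} and \ref{lem;t2}) which collapse to a single boundary point of $(\overline{\mb{R}^s},\rho_\phi)$ because $\phi$ rescales distances on $S_m$ by $1/a_m$, contradicting the existence of an equivalence. Your choice of two base points in $A^+$ and of the parameterization $x_j+n\mb{a}_1$ along the rays differs only cosmetically from the paper's use of $\mb{a}_i=h_{1,i}(\mb{a}_1)$ and $\mb{b}_i=h_{1,i}(\mb{b}_1)$ with $\angle\mb{b}_1O\mb{a}_1=\delta/2$.
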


\begin{proof}
Suppose that they are equivalent. There exists a homeomorphism
$$h:(\overline{\mb{R}^s},\rho_\psi)\to (\overline{\mb{R}^s},\rho_\phi)$$
such that $h(x)=x$ for all $x\in\mb{R}^s$.
Choose a point $\mb{b}_1\in S_1$ such that 
$$\angle \mb{b}_1O\mb{a}_1=\frac{\delta}{2}.$$
Let $\mb{a}=\{\mb{a}_i\}$ and $\mb{b}=\{\mb{b}_i\}$, where 
$$\mb{a}_i=h_{1,i}(\mb{a}_1)=(a_i,0,\cdots,0)\quad\mbox{and}\quad 
\mb{b}_i=h_{1,i}(\mb{b}_1)$$ for all $i\in\mb{N}$. Notice that
$$\sin\frac{\delta}{2}\leq d_E(\mb{a}_i,\mb{b}_i)\leq \frac{\delta}{2}
\quad\mbox{for all }i.$$

\indent
Suppose that $(x_0,x_1,\cdots,x_m)\in\Gamma_{\mb{a}_i,\mb{b}_i}$.
Using Lemma \ref{lem;gd} and \ref{lem;t2}, we can show that
$$\sum^m_{i=1} \delta^\psi(x_{i-1},x_i)\geq
\frac{1}{2\sqrt{2}}d_E(\mb{a}_1,\mb{b}_1)\geq\frac{1}{2\sqrt{2}}
\sin\frac{\delta}{2}>0.$$  
Therefore
$$d^\psi(\mb{a}_i,\mb{b}_i)\geq\frac{1}{2\sqrt{2}}\sin\frac{\delta}{2}
\quad\mbox{for all }i,$$
and hence $\rho_\psi(\mb{a},\mb{b})\neq 0$. Thus $\mb{a}\neq\mb{b}$ in
$(\overline{\mb{R}^s},\rho_\psi)$.

\indent
But we have
\begin{eqnarray*}
\rho_\phi(h(\mb{a}),h(\mb{b}))
&=&\lim_{i\to\infty}\rho_\phi(h(\mb{a}_i),h(\mb{b}_i))\\
&=&\lim_{i\to\infty}\rho_\phi(\mb{a}_i,\mb{b}_i)\\
&=&\lim_{i\to\infty}d^\phi(\mb{a}_i,\mb{b}_i)\\
&\leq&\lim_{i\to\infty}\delta^\phi(\mb{a}_i,\mb{b}_i)\\
&\leq& \lim_{i\to\infty}\left(\frac{1}{1+a_i}+\frac{1}{a_i}
d_E(\mb{a}_i,\mb{b}_i)+\frac{1}{1+a_i}\right)\\
&\leq& \lim_{i\to\infty}\left(\frac{2}{1+a_i}+\frac{\delta}{2a_i}
\right)\\
&=& 0.
\end{eqnarray*}
Therefore $h(\mb{a})=h(\mb{b})$ in $(\overline{\mb{R}^s},\rho_\phi)$.
This is a contradiction.
\end{proof}

\end{document}